\documentclass[11pt]{amsart}

\usepackage{enumerate}
\usepackage{amsmath}
\usepackage{verbatim}

\newcommand{\NN}{\mathbb{N}}
\newcommand{\ZZ}{\mathbb{Z}}
\newcommand{\RR}{\mathbb{R}}

\newcommand{\FFF}{\mathcal{F}}

\newcommand{\GGG}{\mathcal{G}}
\newcommand{\htop}{h_\mathrm{top}}

\newtheorem{definition}{Definition}[section]
\newtheorem{theorem}{Theorem}[section]
\newtheorem{lemma}[theorem]{Lemma}

\newtheorem{thma}{Theorem}

\theoremstyle{remark}
\newtheorem*{remark}{Remark}

\DeclareMathOperator{\It}{It}
\DeclareMathOperator{\sign}{sign}

\numberwithin{equation}{section}

\begin{document}

\title[Generalized $\beta$-transformations]{Generalized $\beta$-transformations and the entropy of unimodal maps}

\author{Daniel J. Thompson}

\address{Department of Mathematics, The Ohio State University, 100 Math Tower, 231 West 18th Avenue, Columbus, Ohio 43210}

\email{thompson@math.osu.edu}
\date{\today}
\thanks{This work is supported by NSF grant DMS-$1461163$}
%AMS codes 37E05 maps of the interval 37B40 topological entropy 30C15 zeroes of polynomials and analytic functions 11R06 other special algebraic numbers
\subjclass[2000]{37E05, 37B40, 11R06, 30C15}
\begin{abstract}

Generalized $\beta$-transformations are the class of piecewise continuous interval maps given by taking the $\beta$-transformation $x \mapsto \beta x ~\pmod 1$, where $\beta>1$, and replacing some of the branches with branches of constant negative slope. If the orbit of $1$ is finite, then the map is Markov, and we call $\beta$ (which must be an algebraic number) a \emph{generalized Parry number}. We show that the Galois conjugates of such $\beta$ have modulus less than $2$, and the modulus is bounded away from $2$ apart from the exceptional case of conjugates lying on the real line.  We give a characterization of the closure of all these Galois conjugates, and show that this set is path connected. Our approach is based on an analysis of Solomyak for the case of $\beta$-transformations. One motivation for this work is that the entropy of a post-critically finite (PCF) unimodal map is the logarithm of a generalized Parry number. Thus, our results give a mild restriction on the set of entropies that can be attained by PCF unimodal maps.
\end{abstract}
\maketitle

\section{Introduction}
For a continuous post-critically finite (PCF) interval map, the exponential of the topological entropy, denoted $\exp h$, is a Perron number. Thurston showed that all Perron numbers can be obtained this way \cite{wT14}. However, for PCF multimodal maps with restricted degree, the situation changes dramatically: the dynamics impose complicated restrictions on which Perron numbers can be attained as $\exp h$. We want to describe these numbers %this set of possible entropies %for PCF unimodal maps 
by understanding the restrictions on the Galois conjugates of $\exp h$. 

This problem was raised in Thurston's final paper \cite{wT14}, which includes a figure of the set of complex numbers which are Galois conjugates of $\exp h$ for PCF unimodal maps. We denote this set by $\Omega_T$. An ongoing problem raised by Thurston's final paper is to understand the structure of $\Omega_T$. So far, progress has been made by Calegari, Koch and Walker on understanding the region of $\Omega_T$ which lies inside the unit disk \cite{CKW}, and Tiozzo \cite{tiozzo} has shown that the set $\overline{ \Omega_T}$ is path connected. There has been no further progress on understanding the region of $\Omega_T$ which lies outside the unit disk. %Problems concerning realization of entropy have a long history in dynamical systems. Notable results include \cite{Li84} and \cite{HM}.
%We denote t.  which show that these conjugates have an interesting and as yet unexplained distribution. 
%The current work is motivated by this problem. 

In this paper, we study an analogous problem for a class of interval maps called generalized $\beta$-transformations, and use this to gain at least some information about the outer boundary of $\Omega_T$. %Our approach yields a framework for understanding the outer boundary of $\Omega_T$, which could yield more refined results in the future.

Our approach is based on the formalism of generalized $\beta$-transformations, as introduced by G\'ora \cite{pG07}. 
The generalized $\beta$-transformations are the class of piecewise continuous interval maps given by replacing some of the branches of a $\beta$-transformation with branches of constant slope $-\beta$.  %G\'ora introduced these maps 
We call a generalized $\beta$-transformation post-critically finite (PCF) if the point $1$ has a finite orbit, to unify terminology with the case of continuous multimodal maps. We let $\Omega$ denote the set of Galois conjugates of all $\beta$ such that there exists a PCF generalized $\beta$-transformation.

 The class of generalized $\beta$-transformations contains all $\beta$-transformations, and many continuous interval maps. Of particular interest is the case where $\beta \in(1,2)$, the first branch is increasing, and the second branch is decreasing. This gives a class of continuous unimodal maps among which the entropy of every PCF unimodal map is represented (i.e. every PCF unimodal map is semi-conjugate to a PCF map in this class with the same entropy). In particular, we can conclude that $\Omega_T \subset \Omega$, see \S\ref{unimodal} for more details.

 %The question of whether Salem numbers are Parry  %If $\beta$ is a Perron number with a conjugate $z$ with $1<|z|\leq (1+\sqrt5)/2$, then the corresponding $\beta$-transformation may or may not be PCF.

%We extend Solomyak's analysis to the . Thus, analysis of the entropy of generalized $\beta$-transformations brings the study of entropy for unimodal maps and $\beta$-transformations into the same framework.

%We say a piecewise monotonic interval map is a \emph{uniform $\lambda$-expander} if each branch of monotonicity has slope $\pm \lambda$. Thus, generalized $\beta$-transformations are a special class of uniform $\beta$-expanders. It is well known that every continuous multimodal map $f$ is semi-conjugate to a continuous uniform $\lambda$-expander $g$, and if $f$ is PCF then so is $g$ (.

We show that $\Omega$ lies inside a disk of radius $2$. %We call such maps post-critically finite (PCF) to unify terminology with the case of continuous multimodal maps. 
Since $\exp h$ and the degree of a (continuous) generalized $\beta$-transformation can be arbitrarily large, this result contrasts sharply with the result that any Perron number can be achieved as the entropy of a general PCF multimodal map. 

Although it is trivial that $\Omega_T$ lies inside a disk of radius $2$, since unimodal maps satisfy $\exp h \leq 2$, our results can be improved by excluding the exceptional case of real-valued Galois conjugates.  Experimental investigation of Beaucoup, Borwein, Boyd and Pinner \cite{BBBP} suggests that a sharp bound should be less than $1.6$.  We show rigorously that all non-real-valued Galois conjugates have modulus bounded away from $2$, at least establishing the principle that $\Omega_T \setminus \RR$ lies in a disk of radius less than $2$.

Our techniques are inspired by an analysis of Solomyak \cite{So94}. For `PCF'  $\beta$-transformations, i.e. those for which the point $1$ has a finite orbit, in which case we call $\beta$ a \emph{Parry number}, Solomyak showed that all Galois conjugates are bounded in modulus by the golden mean, and that this bound is sharp, improving on a bound of $2$ obtained by Parry \cite{Pa}. Furthermore, Solomyak established a Structure Theorem which gives a rather explicit characterization of the largest Galois conjugate in each direction (i.e. with a prescribed argument). Our approach is based on extending these results as far as possible to the setting of PCF generalized $\beta$-transformations.  In particular, we have a version of the Solomyak Structure Theorem for generalized $\beta$-transformations. This result provides an analytic tool for studying the largest modulus of points in $\Omega$, and thus for bounding above the largest modulus of points in $\Omega_T$. Showing that this theory applies in the context of $\Omega_T$ is one of the main points of this article, as no techniques were previously available for attacking this problem.

The main idea of the argument is to characterize those elements of $\Omega$ with $|z|>1$ as the inverse of a zero of an analytic function in the class $
\FFF = \{ T(w) = 1+ \sum_{j=1}^{\infty} a_j w^{j} : a_j \in [-1,1]\}.$
This correspondence is obtained from the expression for the generalized $\beta$-expansion of $1$, and makes the problem tractable to further analysis. We also show that if $\lambda$ is a zero of a function in $\FFF$, then $\lambda^{-1} \in \overline \Omega$. We use this to show that $\overline \Omega$ is path connected.

In \S\ref{sec2}, we introduce generalized $\beta$-transformations. In \S\ref{sec3}, we introduce generalized Parry numbers and generalized Parry polynomials, %analogously to the corresponding definitions in the $\beta$-transformation literature, 
and obtain our basic bound on the size of  $\Omega$.  In \S \ref{sec4}, we establish a description of the outer boundary of $\Omega$ and establish slightly improved bounds on $\Omega \setminus \RR$. In \S \ref{sec3.1}, we study $\overline \Omega$. In \S \ref{unimodal}, we apply our results to unimodal maps.

%We note the following result in the opposite direction: if all conjugates of $\beta$ have modulus less than $1$ (i.e. $\beta$ is a Pisot number), then the $\beta$-transformation is PCF \cite{kS80}. Whether this is the best bound possible is a difficult open problem. If one could find a Salem number which is not a Parry number, then this would give an affirmative answer. Numerical evidence and hueristic arguments by Thurston \cite{wT14} and Boyd suggests that such Salem numbers should exist, but it seems to be a very difficult problem to prove this. (The question of whether Salem numbers are Parry was first raised by Schmidt in \cite{kS80} and is included in \cite{Boy} - the only progress is Boyd's result that degree 4 Salem numbers are Parry \cite{dB89}).

%This is a problem of interest for the closure of the Galois conjugates of entropies of PCF unimodal maps, which we denote $\overline \Omega_T$, which is a subset of $\overline \Omega$. This question of path connectedness of $\overline \Omega_T$ was first asked by Thurston, and was solved by Tiozzo \cite{tiozzo}.

\section{Generalized $\beta$-transformations} \label{sec2}
The $\beta$-transformations are the class of piecewise continuous interval maps $x \to \beta x \pmod 1$, where $\beta>1$. The class of generalized $\beta$-transformations, introduced by G\'ora \cite{pG07}, are obtained from the $\beta$-transformations by flipping some of the branches so the slope is $-\beta$, and extending the map to a piecewise continuous map of the closed interval $[0,1]$. It is clear what it means to flip a full branch of the map. If we flip the rightmost branch, which is the only branch that is not full, we mean that this `flipped branch' of the map decreases from $1$ to $1-\{\beta\}$, where $\{\beta\}$ is the fractional part of $\beta$.  The precise definition is given below. %$1$ minus the fractional part of $\beta$. 
We record the configuration of positive and negative slopes by a vector $E$ of $1$'s and $-1$'s. The $1$'s correspond to increasing branches, and the $-1$'s correspond to decreasing branches.  There is a large literature on using classes of interval maps to give expansions of real numbers \cite{wP64b, DK, DHK, St13, R16}. Generalized $\beta$-transformations were introduced in this context. Connections with the theory of tilings are given in \cite{PR}.%We can consider $f$ either on the interval $[0,1]$, or on $S^1$, where $0$ and $1$ are identified: both points of view are useful at different times.
\subsection*{The $(1,-1)$ case}
A case of particular interest in this study is the sign configuration $E=(1,-1)$, in which case the map is a continuous unimodal map. %(in fact, it is a uniform $\lambda$-expander). 
For ease of exposition, we define the map rigorously in this case first. %The case $(1,1)$ is also a good prototype. 
We let $\beta \in (1,2]$, and we let $I_0= [0, 1/\beta]$ and $I_1 = (1/\beta, 1]$, so that the sets $I_0, I_1$ denote the partition of $[0,1]$ into the two intervals of monotonicity.
%e let $I_0$ and $I_1$ denote the partition of $[0,1)$ into the two intervals of monotonicity, whose common endpoint is $1/\beta$ (with the point $1/\beta$ included in $I_0$). 
In this case, the generalized $\beta$-transformation has the formula

$$
f(x) =
\begin{cases}
\beta x & \text{if }x \in I_0 \\
2- \beta x & \text{if } x \in I_1
\end{cases}
$$
For $j \geq 1 $, we let
$$
d(x, j) =
\begin{cases}
0 & \text{if } f^{j-1}x \in I_0 \\
2 & \text{if } f^{j-1}x \in I_1.
\end{cases}
$$ 
%and we let $d(j) := d(1, j)$. 
The symbols $d(x, j)$ are the digits used in the generalized $\beta$-expansion of $x$.
For $j \geq 1 $, we let
$$
e(x, j) =
\begin{cases}
1 & \text{if } f^{j-1}x \in I_0 \\
-1 & \text{if } f^{j-1}x \in I_1
\end{cases}
$$ 
We define the `cumulative sign' by $s(x, 1) =1$,
\[
s(x, j+1) = e(x, j) s(x, j) = \prod_{l=1}^{j} e(x, l), 
\]
and we let $s(j) := s(1, j)$. The generalized $\beta$-expansion of $x$ is the expression
\[
x = \frac{s(x,1) d(x, 1)}{\beta} + \frac{s(x,2) d(x, 2)}{\beta^2} + \cdots + \frac{s(x,j) d(x, j)}{\beta^j} + \cdots
\]
By \cite[Corollary 2]{pG07}, this expression is valid for every $x\in [0,1]$. %In particular, the generalized $\beta$-expansion of $1$ is the expression
%\[
%1 = \frac{s(1) d(1)}{\beta} + \frac{s(2) d(2)}{\beta^2} + \cdots + \frac{s(j) d(j)}{\beta^j} + \cdots
%\]
\subsection*{All generalized $\beta$-transformations} Let $m \in \NN$ and $\beta \in (m, m+1]$. For such $\beta$, a generalized $\beta$-transformation has $m+1$ branches. We let $E=(E(0), E(1), \ldots, E(m)) \in \{1,-1\}^{m+1}$ be the vector which describes the configuration of slopes of the map (where an entry $1$ corresponds to positive slope, and an entry $-1$ corresponds to negative slope). We partition $I$ into $m+1$ intervals
\[
I_0 = \left[0, \frac{1}{\beta} \right ], I_1 = \left(\frac{1}{\beta}, \frac{2}{\beta} \right ], \ldots, I_m = \left(\frac{m}{\beta}, 1 \right ],
\]
and we define the \emph{$(\beta, E)$-transformation} $f=f_{\beta, E}$ by the formula
$$
f(x) =
\begin{cases}
\beta x -k& \text{if }x \in I_k \text{ and } E(k)=1 \\
- \beta x + k+1 & \text{if } x \in I_k \text{ and } E(k) =-1.
\end{cases}
$$
Note that the intervals $I_j$ are defined to include their right end-points, and $f$ is defined on the whole interval $[0,1]$. In the case that all entries of $E$ are $1$, then $f$ is an extension of the classical $\beta$-transformation $x \to \beta x \pmod 1$ to a piecewise continuous map of the closed interval $[0,1]$.

For $j \geq 1$, we let
$$
d(x, j) =
\begin{cases}
k & \text{if } f^{j-1}x \in I_k \text{ and } E(k)=1\\
k+1 & \text{if } f^{j-1}x \in I_k \text{ and } E(k)=-1.
\end{cases}
$$ 
%and we let $d(j) := d(1, j)$. 
For $j \geq 1 $, we let
$
e(x, j) = E(k) \text{ if } f^{j-1}x\in I_k.
$ 
%$$
%e(x, j) =
%\begin{cases}
%1 & \text{if }f^{j-1}x \in I_k \text{ and } E(k)=1 \\
%-1 & \text{if }f^{j-1}x \in I_k \text{ and } E(k)=-1.
%\end{cases}
%$$ 
We define the `cumulative sign' by $s(x, 1) =1$,
\[
s(x, j+1) = e(x, j) s(x, j) = \prod_{l=1}^{j} e(x, l).
\]
%and we let $s(j) := s(1, j)$.  
Again, G\'ora shows that for every $x\in [0,1]$,
\begin{equation} \label{betaexp}
x = \frac{s(x,1) d(x, 1)}{\beta} + \frac{s(x,2) d(x, 2)}{\beta^2} + \cdots + \frac{s(x,j) d(x, j)}{\beta^j} + \cdots
\end{equation}
We refer to this expression as the $(\beta, E)$-expansion for $x$. For the $(\beta, E)$-expansion of $1$, we write %the digits and signs as  
$d(j):=d(1, j)$ and $s(j):=s(1, j)$.

We sometimes write $(\beta,E)$-expansions using sequence notation   
\[
((s(x,1), d(x,1)), (s(x, 2), d(x, 2)), (s(x, 3), d(x, 3)), \ldots).
\]
The set-up above includes the classic $\beta$-expansion simply by setting all entries in $E$ to be $1$. In this case, $s(x,j) =1$ for all $x$ and $j$, and \eqref{betaexp} reduces to the standard $\beta$-expansion of R\'enyi and Parry.  
%We let
%\[
%\tau^jw :=
%\begin{cases}
% & ((s(j), d(j)), (s(j+1), d(j+1)), \ldots)  \text{ if } s(j)=1 \\
%& ((-s(j), d(j)), (-s(j+1), d(j+1)), \ldots)  \text{ if } s(j)=-1 \\
%\end{cases}
%\]
%4and note that if $w$ is the $(\beta, E)$-expansion of a point $x$, then $\tau^jw$ is the $(\beta, E)$-expansion of $f^jx$.

\subsection{Finite versus infinite $(\beta, E)$-expansions}
It is possible in the definition of the $(\beta, E)$-expansion that there exists $n$ so that $d(x,j)=0$ for all $j>n$, and thus the $(\beta, E)$-expansion of $x$ is finite. This can only happen if $E(0)=1$, so $0$ is a fixed point, and $f^{n}x=0$. Since the set of preimages of $0$ is a subset of the left end-points of the intervals $I_j$, we must have $f^{n-1}x \in\{1/\beta, 2/\beta, \ldots, [\beta]/\beta\}$. 

We explain how to derive an infinite $(\beta, E)$-expansion from a finite $(\beta,E)$-expansion. We start with the case that $x=1$ has a finite $(\beta, E)$-expansion.  The finite $(\beta, E)$-expansion of $1$ is thus
\begin{equation} \label{eqn:1}
1 = \frac{s(1)d(1)}{\beta}+ \frac{s(2)d(2)}{\beta^2} + \ldots + \frac{s(n)d (n)}{\beta^n}, %+ \frac{1}{\beta^n},
\end{equation}
where %$s(j):=s(1, j)$ and $d(j):=d(1, j)$ and 
$d(n) \neq 0$. Let $d'(j)=d(j)$ for $j\in \{1, \ldots, n-1\}$, and 
\[
d'(n)=
\begin{cases}
 d(n)-1 &\text{ if } s(n)=1 \\
d(n)+1 &\text{ if } s(n)=-1,
\end{cases}
\]
noting that $0\leq d'(n) \leq d(1)$. This is because $1\leq d(n)$, and $d(n)\leq d(1)$. It is easily checked that the only way we can have $d(n)= d(1)$ is if $\beta \in \mathbb N$, and $f(1)=0$. In this case, since $s(1)=1$, $d'(n)= d'(1)=d(1)-1$. 

We have
\begin{equation} \label{eqn:3}
1 = \frac{s(1)d'(1)}{\beta}+ \frac{s(2)d'(2)}{\beta^2} + \ldots + \frac{s(n)d'(n)}{\beta^n} + \frac{1}{\beta^n},
\end{equation}
and thus for any $m\geq0$,
 \begin{equation}\label{eqn:2}
\frac{1}{\beta^{mn}} = \sum_{j=1}^n \frac{s(j)d'(j)}{\beta^{j+mn}} + \frac{1}{\beta^{(m+1)n}}.
\end{equation}
Note that by \eqref{eqn:3} and \eqref{eqn:2}, we have
\[
1 = \frac{s(1)d'(1)}{\beta}+  \ldots + \frac{s(n)d'(n)}{\beta^n} + \frac{s(1)d'(1)}{\beta^{n+1}}+ \ldots + \frac{s(n)d'(n)}{\beta^{2n}} + \frac{1}{\beta^{2n}}.
\]
Continuing this way, using \eqref{eqn:2}, we obtain that for any $m\geq1$,
\[
1= \sum_{j=0}^{m-1}\sum_{k=1}^n \frac{s(k)d'(k)}{\beta^{k+jn}} + \frac{1}{\beta^{mn}},
\]
and it follows that $1 = \sum_{j=1}^\infty s(j)d'(j)/\beta^j$. This expression is the \emph{infinite $(\beta,E)$-expansion of $1$}. Note that if $1$ has a finite $(\beta, E)$-expansion, then the corresponding infinite $(\beta, E)$-expansion is periodic. 

Now suppose that $x$ has a finite $(\beta, E)$-expansion. Then
\begin{align*}
x &= \frac{s(x, 1)d(x, 1)}{\beta}+ \frac{s(x, 2)d(x, 2)}{\beta^2} + \ldots + \frac{s(x, k)d(x, k)}{\beta^k} \\%+ \frac{1}{\beta^n},\\
&= \frac{s(x, 1)d(x, 1)}{\beta}+ \frac{s(x, 2)d(x, 2)}{\beta^2} + \ldots + \frac{s(x, k)d'(x, k)}{\beta^k} + \frac{1}{\beta^k},
\end{align*}
where we define $d'(x, k)=d(x, k)-1$ if $s(x, k)=1$, and $d'(x, k)= d(x, k)+1$ if $s(x, k)=-1$. %Thus
%\[
%x = \frac{s(x, 1)d(x, 1)}{\beta}+  \ldots +\frac{s(x, k-1)d(x, k-1)}{\beta^{k-1}} + \frac{s(x, k)d'(x, k)}{\beta^k} +  \sum_{j=1}^\infty \frac{s(j)d'(j)}{\beta^{k+j}},
%\]
Thus, % if we let $w$ be the $(\beta, E)$-expansion of $1$ (if the $(\beta, E)$-expansion of $1$ is finite than we take the infinite $(\beta, E)$-expansion of $1$ here ), then 
the infinite $(\beta, E)$-expansion of $x$ is given by the sequence $vw$, where $$v= ((s(x, 1), d(x, 1)), \ldots, (s(x, k-1), d(x, k-1)), (s(x, k), d'(x, k))),$$ 
and $w$ is the (infinite) $(\beta, E)$-expansion of $1$. 
\subsection{Space of Itineraries} \label{sec:itinerary} There is another way to use $f$ to assign a sequence to a point: it is sometimes convenient to consider the itinerary  of a point relative to the partition $\{I_0, \ldots, I_m\}$, where $\beta \in (m, m+1]$. 

Let $\Sigma_m= \prod_{i=1}^\infty\{0, \ldots, m\}$. Given $x \in I$, its itinerary $$\It (x) =(\It(x,1), \It(x,2), \ldots)$$ under $f= f_{\beta, E}$ is the sequence in $\Sigma_m$ given by
\[
\It(x,j)=i \text{ if } f^{j-1}x \in I_i.
\]
Using the rules on the digits $d(x, i)$ and the signs $s(x, i)$, the $(\beta, E)$-expansion for $x$ can be recovered from $\It(x)$ and vice versa. In particular, we can map the $(\beta, E)$-expansion of $1$ to the itinerary of $1$ by the formula
\[
\It(1,j)=
\begin{cases}
 d(j) &\text{ if } s(j+1)=s(j) \\
d(j)-1 &\text{ if } s(j+1)=-s(j).
\end{cases}
\]

We recall the criteria of G\'ora for determining the validity of itineraries, and hence $(\beta, E)$-expansions. First we define an order $<_E$ on $\Sigma_m$. Given a finite word $w(1)\cdots w(j)$ from the alphabet $\{0, \ldots, m\}$, we let $\sign_E(w):= E(w(1))\cdots E(w(j))$.

We define the ordering $\leq_E$ by declaring $w <_Ev$ if $w(1)<v(1)$, or if $j$ is the first place where $w(j)\neq v(j)$, then
\[
w <_Ev \text{ if }
\begin{cases}
& w(j)< v(j) \text{ if } \sign_E(w(1) \cdots w(j-1)) = 1 \\
& w(j)> v(j) \text{ if } \sign_E(w(1) \cdots w(j-1)) = -1.
\end{cases}
\]
The order $\leq_E$ also makes sense on the set of finite sequences $\prod^k_{i=1}\{0, \ldots, m\}$ for any fixed $k\geq1$. Proposition 5 of G\'ora \cite{pG07} says that a sequence $w \in \Sigma_m$ is the itinerary of a point $x$ under $f_{\beta, E}$ if and only if for all $j\geq0$, $\sigma^jw \leq_E \It(1)$. We remark that by taking the closure of the space of all such itineraries in $\Sigma_m$, this criteria can be thought of as determining the symbolic dynamics associated to $f_{\beta, E}$.

The order $\leq_E$ is an essential ingredient in the theory of one-dimensional maps, and has its roots in the work of Parry \cite{wP64b}. This is a special case of the characterization of symbolic dynamics of piecewise monotonic maps that is formulated more generally in e.g. \cite{FP09}. Similar ideas appear in the celebrated work of Milnor and Thurston \cite{MT} for continuous multimodal maps, where they assign to each point a sequence $\theta(x)$, called the invariant coordinate of $x$. For points $x$ that are not pre-images of a critical point, the sequence $\theta(x)$ is exactly determined by the itinerary and sign data $(s(x,i))_{i \in \NN}$ of $x$.

%so in particular $\tau^j \It(1) \leq_E \It(1)$.

\subsection{Key identities for generalized $\beta$-transformations}  First, we establish the fundamental relationship between the coefficients $d(j)$, $s(j)$, and the `signed orbit' of $1$ which we write $c_j:= s(j+1)f^j(1)$. Note that $c_0=1$.
\begin{lemma} For $j\geq0$, the coefficients satisfy the recursion relation
\begin{equation} \label{recurs}
\beta c_j -s(j+1) d(j+1) = c_{j+1}.
\end{equation}
\end{lemma}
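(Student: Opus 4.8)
The plan is to unfold a single application of $f$ to the orbit of $1$ and then reweight by the cumulative sign. Fix $j \geq 0$, set $y = f^j(1)$, and let $k$ be the index with $y \in I_k$, so that $\It(1, j+1) = k$, the sign is $e(j+1) = E(k)$, and the digit $d(j+1)$ equals $k$ when $E(k) = 1$ and equals $k+1$ when $E(k) = -1$.

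First I would record the two branch formulas for $f(y) = f^{j+1}(1)$ as a single identity. If $E(k) = 1$ then $f(y) = \beta y - k$, while if $E(k) = -1$ then $f(y) = -\beta y + (k+1)$. The key observation is that in both cases the coefficient of $\beta y$ is exactly $e(j+1)$, and the additive constant ($-k$ in the increasing case, $+(k+1)$ in the decreasing case) equals $-e(j+1)\, d(j+1)$. Hence uniformly
\[
f^{j+1}(1) = e(j+1)\bigl(\beta f^j(1) - d(j+1)\bigr).
\]

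Next I would multiply through by $s(j+2) = e(j+1)\, s(j+1)$ and use $e(j+1)^2 = 1$. Since by definition $s(j+2)\, f^{j+1}(1) = c_{j+1}$ and $s(j+1)\, f^j(1) = c_j$, this gives
\[
c_{j+1} = s(j+1)\bigl(\beta f^j(1) - d(j+1)\bigr) = \beta c_j - s(j+1)\, d(j+1),
\]
which rearranges to \eqref{recurs}; the base value $c_0 = 1$ has already been noted.

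The computation is short, and the only point needing genuine care is the case analysis that unifies the two branch formulas into $f(y) = e(j+1)(\beta y - d(j+1))$: one must verify that the sign placed in front of the digit $d(j+1)$ is correctly governed by $e(j+1) = E(k)$, so that the decreasing branch's constant $+(k+1)$ is indeed recovered as $-e(j+1)\,d(j+1)$ rather than being absorbed with the wrong sign. Everything else is bookkeeping with the definitions of $d$, $e$, and $s$ specialized to the orbit of the point $1$.
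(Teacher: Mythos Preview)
Your proof is correct and follows essentially the same route as the paper: you establish the unified branch formula $f^{j+1}(1)=e(1,j+1)\bigl(\beta f^j(1)-d(j+1)\bigr)$ and then multiply by the cumulative sign to convert it into the recursion for $c_j$. The only cosmetic difference is that the paper multiplies by $s(j+1)$ after rewriting the identity as $\beta f^j(1)-d(j+1)=e(1,j+1)f^{j+1}(1)$, whereas you multiply by $s(j+2)$ directly; these are equivalent.
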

\begin{proof} First, we rewrite the map $f$ as
\[
f(x) = e(x,1)(\beta x - d(x,1)),
\]
and thus
\[
f^j(x) = e(x, j)(\beta f^{j-1}(x) - d(x, j)).
\]
In particular, we have $f^j(1) = e(1, j)(\beta f^{j-1}(1) - d(j))$ for $j\geq1$. Thus, for $j\geq0$, we have
%\[
%a_{k+1} = e(k+1)(\beta a_k - d(k+1)).
%\]
\[
\beta f^j(1) - d(j+1) = e(1, j+1) f^{j+1}(1).
\]
Multiplying by $s(j+1)$ yields
\[
\beta s(j+1) f^j(1) - s(j+1)d(j+1) = s(j+1) e(1, j+1) f^{j+1}(1) = s(j+2) f^{j+1}(1),
\]
which establishes \eqref{recurs}.
%and thus $\beta c_j -s(j+1) d(j+1) = c_{j+1}$ as required.
%which establishes \eqref{recurs}.
\end{proof}
Now we prove an identity which is key to our analysis, generalizing an identity which was observed in Solomyak \cite{So94} for $\beta$-transformations.
\begin{lemma} \label{identity0}
For any $z$ with $|z|>1$,
\[
1 - \sum_{j=1}^\infty s(j)d(j) z^{-j} = \left(1- \frac{\beta}{z} \right )\sum_{j=0}^{\infty} c_j z^{-j},
\]
%\\[
%1 - \sum_{j=1}^\infty s(j)d(j) z^{-j} = (1- \frac{\beta}{z})(1+ \sum_{j=1}^{\infty} c_j z^{-j}),
%\]
where $c_j =s(j+1) f^{j}(1)$.
\end{lemma}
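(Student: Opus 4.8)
The plan is to establish the identity by multiplying out the right-hand side and using the recursion relation \eqref{recurs} to collapse the resulting telescoping sum. The key observation is that the product on the right should, term by term, reproduce exactly the coefficients $-s(j)d(j)$ appearing on the left, and the recursion is precisely what makes this work.

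First I would expand the right-hand side formally. Writing
\[
\left(1 - \frac{\beta}{z}\right)\sum_{j=0}^\infty c_j z^{-j} = \sum_{j=0}^\infty c_j z^{-j} - \beta \sum_{j=0}^\infty c_j z^{-(j+1)},
\]
I would reindex the second sum and collect the coefficient of each power $z^{-j}$. For $j=0$ the coefficient is simply $c_0 = 1$, which matches the constant term $1$ on the left. For $j \geq 1$, the coefficient of $z^{-j}$ is $c_j - \beta c_{j-1}$. Now I invoke the recursion \eqref{recurs} in the form $\beta c_{j-1} - s(j)d(j) = c_j$, i.e. $c_j - \beta c_{j-1} = -s(j)d(j)$. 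Substituting, the coefficient of $z^{-j}$ becomes exactly $-s(j)d(j)$, so the right-hand side equals $1 - \sum_{j=1}^\infty s(j)d(j) z^{-j}$, as desired.

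The main obstacle is not the algebra, which is a routine telescoping argument, but the justification of convergence and the legitimacy of the formal rearrangement. Since $|f^j(1)| \leq 1$ for all $j$ and $|s(j+1)| = 1$, we have $|c_j| \leq 1$, so the series $\sum_{j=0}^\infty c_j z^{-j}$ converges absolutely for $|z| > 1$; likewise $|s(j)d(j)| \leq d(1) \leq m+1$ is bounded, so $\sum_{j=1}^\infty s(j)d(j) z^{-j}$ converges absolutely for $|z|>1$ as well. I would therefore remark at the outset that all three series converge absolutely in the region $|z|>1$, which licenses both the distribution of the factor $(1 - \beta/z)$ across the sum and the reindexing and regrouping of terms. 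With absolute convergence in hand, the coefficient-matching computation above is fully rigorous, and the identity follows.
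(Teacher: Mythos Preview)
Your proposal is correct and takes essentially the same approach as the paper: expand the right-hand side, collect coefficients of $z^{-j}$, and invoke the recursion \eqref{recurs} to identify each coefficient with $-s(j)d(j)$. Your additional remarks on absolute convergence (via $|c_j|\le 1$ and boundedness of $d(j)$) make explicit what the paper only gestures at with the phrase ``so that the above series converge,'' but the argument is otherwise identical.
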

\begin{proof}
We assume that $|z|>1$ so that the above series converge. Multiplying out the right hand side, we obtain
\[
\left (1- \frac{\beta}{z} \right)\sum_{i=0}^{\infty} c_i z^{-i}= 1+(c_1-\beta)z^{-1}+ \ldots + (c_{j+1}-c_j\beta)z^{-(j+1)} + \ldots.
\]
For all $j\geq 0$, we have $c_{j+1}-c_j\beta = -s(j+1)d(j+1)$ by \eqref{recurs}, which yields the required inequality.
%\[
%(1- \frac{\beta}{z})(1+ \sum_{i=1}^{\infty} c_i z^{-i})= 1+(c_1-\beta)z^{-1}+ (c_2-c_1\beta)z^{-2} + \ldots + (c_{k+1}-c_k\beta)z^{-(k+1)} + \ldots
%\]
%and comparing coefficients yields
%\[
%\beta - c_1 = s(1)d(1)
%\]
%\[
%\beta c_1-c_2 = s(2)d(2)
%\]
%\[
%\vdots
%\]
%\[
%\beta c_{k}- c_{k+1} = s(k+1)d(k+1).
%\]
%Thus the coefficients $c_k$ satisfy this recursion relation we're done.
\end{proof}
%We will show that any Galois conjugate of $\beta$ %(and the `generalized' $\beta$-conjugates to be defined momentarily) 
%with $|z| >1$ is a zero of the expression in the lemma, and thus $
%1+ \sum_{i=1}^{\infty} c_i z^{-i} = 0.$
\subsection{Post-critically finite generalized $\beta$-transformations} \label{sec:pcf}
We define a generalized $\beta$-transformation to be \emph{post-critically finite (PCF)} if the orbit of $1$ is finite, i.e. $\{f^j(1) \mid j\geq0\}$ takes finitely many values. More generally, we say a piecewise monotonic map (not necessarily continuous) is \emph{post-critically finite} if all maxima, minima and discontinuity points have a finite orbit. For a generalized $\beta$-transformation, all discontinuity points are pre-images of the points $1$ or $0$, and $0$ is either a fixed point or satisfies $f(0)=1$, so these definitions agree. We choose this terminology in order to be consistent with the literature on continuous multimodal maps, where PCF is the standard term for a map whose topological critical points have a finite orbit.

By the general theory of piecewise monotonic maps \cite{MT, ALM}, if the map is PCF, then it admits a Markov partition. That is, there is a partition $\mathcal P$ of the interval into subintervals such that for all $P \in \mathcal P$, $\overline{f(P)}$ is the closure of a union of elements of $\mathcal P$. The partition $\mathcal P$ is obtained by taking subintervals whose endpoints are the forward orbits of the critical points.   Thus, PCF interval maps are the ones that can be modeled by a shift of finite type, and thus have a well understood orbit structure. This motivates why we investigate which interval maps are PCF. 

\section{Generalized Parry numbers and Parry polynomials} \label{sec3}
We review the definition of a Parry number, and a Parry polynomial from the $\beta$-transformation literature, and extend these concepts to generalized $\beta$-transformations. Parry numbers and the Parry polynomial were both introduced in his seminal paper on $\beta$-expansions \cite{Pa}. %Parry numbers are sometimes called $\beta$-numbers, following the original terminology of \cite{Pa}.

\subsection{Parry numbers and Parry polynomials}
A number $\beta>1$ is a \emph{Parry number} if the $\beta$-expansion of $1$ is pre-periodic. This occurs if and only if $1$ has a finite orbit under $f_\beta$. We say that a Parry number is a \emph{simple Parry number} if the $\beta$-expansion of $1$ is periodic. %Thus, in the simple case, the $\beta$-expansion of $1$ is given by . % the itinerary of 1 is given by $(d(1), \ldots d(p))^\infty$. T

For a Parry number, the \emph{Parry polynomial} $P_\beta(z)$ is a monic polynomial with integer coefficients which is naturally associated to the infinite $\beta$-expansion of $1$.  We obtain $P_\beta$ by taking the infinite $\beta$-expansion of 1
\[
1=\sum_{j=1}^{\infty} \frac{d(j)}{\beta^j},
\] 
and using the geometric series formula on the right hand side. We multiply through so all $\beta$ have a non-negative exponent, and bring all terms to one side. The resulting expression is the formula $P_\beta(\beta) =0$. For a simple Parry number with infinite $\beta$-expansion of $1$ given by $(d(1), \ldots, d(p))^\infty$, we arrive at the expression
\begin{align*}
P_\beta (z) &= z^p- \sum_{j=1}^p d(j) z^{p-j} -1 \\
& = z^p - d(1)z^{p-1} - d(2) z^{p-2} - \cdots - d(p-1) z -1 - d(p).
\end{align*}

Since $P_\beta(\beta) =0$, all Galois conjugates of $\beta$ must also satisfy $P_\beta (z) =0$.  The polynomial $P_\beta$ is not necessarily irreducible (i.e. it might have higher degree than the minimal polynomial for $\beta$), so $P_\beta$ may have zeros which are not Galois conjugates of $\beta$.  Such a zero is called a \emph{$\beta$-conjugate}. The distribution of $\beta$-conjugates was studied in \cite{VG, VG2}. 
%\begin{remark}
%Expanding out the $f_\beta$'s in equation \eqref{pre-p} again yields the expression $P_\beta(\beta) =0$. This is an alternative way to derive the formula for $P_\beta$.%  Consequently, all Galois conjugates of $\beta$ must also satisfy $P_\beta (z) =0$.  

%\foot{allow $\beta$-conjugates bigger thatn $\beta$? Is $\beta$ largest root? Irrelevant I think} Verger-Gaugry gives an interesting study of the distribution of the $\beta$-conjugates in \cite{VG}. 
%\end{remark}

%We have the following lemma due to Solomyak.
%\begin{lemma} [Solomyak, Lemma] \label{keyidentity}
%A number $z \neq \beta $ with $|z| >1$ is a zero of $P_\beta (z)$ if and only if
%\[
%1 - \sum_{i=1}^\infty d(i) z^{-i} =0
%\]
%\end{lemma}

%We prove the analogue of this result for generalized $\beta$-transformations.

\subsection{Generalized Parry numbers and Parry polynomials} We define a number $\beta>1$ to be a \emph{generalized Parry number} if we can find $E$ so that the orbit of $1$ under the $(\beta, E)$-transformation is finite.  %If the orbit is periodic, we say that $\beta$ is a \emph{simple generalized Parry number}. 
Thus, in the terminology of \S\ref{sec:pcf}, $\beta$ is a generalized Parry number iff there exists $E$ so that the $(\beta, E)$-transformation is PCF. %We define a polynomial which generalizes the Parry polynomial to this setting as follows. %$E$ so that the $(\beta, E)$-transformation is PCF. 
%In particular, $\beta$ is a Parry number iff the corresponding $\beta$-transformation is PCF.

%For a generalized Parry number, we define the generalized Parry polynomial using the $(\beta, E)$-expansion of $1$. 

Let $f = f_{\beta,E}$ be a PCF generalized $\beta$-transformation. Since the orbit of $1$ is periodic or pre-periodic, then so is the sequence given by the infinite $(\beta, E)$-expansion of $1$.

\begin{definition}
For a post-critically finite $(\beta, E)$-transformation, let us write the infinite $(\beta, E)$-expansion of $1$ as $vw^\infty$, where
\[
v = ((s(1), d(1)), (s(2), d(2)), \ldots, (s(k), d(k))),
\]
\[
w =  ((s(k+1), d(k+1)), \ldots, (s(k+p), d(k+p))).
\]
In the above, $w$ is written with the lowest possible period, and in the periodic case $k=0$, $v$ is the empty word. We define the \emph{generalized Parry polynomial} to be
\[
P_{\beta, E}(z) = z^{k+p} - \sum_{j=1}^{k+p}s(j)d(j)z^{k+p-j} - z^k+\sum_{j=1}^ks(j)d(j)z^{k-j}.
\]
\end{definition}
The formula simplifies if the $(\beta,E)$-expansion of $1$ is periodic% (i.e. $v$ is the empty word, so $k=0$)
, in which case
\[
P_{\beta, E} (z) %& %= z^p - s(1)d(1)z^{p-1} - s(2)d(2) z^{p-2} - \cdots - s(p)d(p-1) z -1 - s(p)d(p) \
 = z^p- \sum_{j=1}^p s(j)d(j) z^{p-j} -1. 
\]
%Note also that if all $s(j)$ are equal to $1$, then we recover the Parry polynomial for $f_\beta$.
The expression $P_{\beta, E}(\beta) =0$ can be derived from applying the geometric series formula to the $(\beta, E)$-expansion of $1$, which motivates the definition of $P_{\beta, E}$. The following lemma is based on this relationship. %and generalizes \cite[Lemma ?]{So94}.
\begin{lemma} \label{keyidentity1}
A number $z$ with $|z| >1$ is a zero of $P_{\beta, E} (z)$  if and only if
\begin{equation} \label{identity}
1 - \sum_{j=1}^\infty s(j)d(j) z^{-j} =0.
\end{equation}
\end{lemma}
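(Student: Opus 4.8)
The plan is to connect the finite polynomial identity $P_{\beta,E}(z)=0$ with the infinite power series in \eqref{identity} via the geometric series summation, exploiting the (pre-)periodic structure of the $(\beta,E)$-expansion of $1$. The key observation is that since the sequence $(s(j)d(j))$ is eventually periodic (equal to $vw^\infty$ in the notation of the definition), the infinite sum $\sum_{j=1}^\infty s(j)d(j)z^{-j}$ is a sum of a finite initial block plus a geometric-type tail, and can therefore be written as a rational function of $z$ whose numerator is essentially $P_{\beta,E}(z)$.

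**First I would** treat the periodic case $k=0$, where the expansion is $w^\infty$ with period $p$. Writing $g(z) := 1 - \sum_{j=1}^\infty s(j)d(j)z^{-j}$ and using periodicity $s(j+p)d(j+p)=s(j)d(j)$, I would factor the tail as a geometric series in $z^{-p}$:
\[
\sum_{j=1}^\infty s(j)d(j)z^{-j} = \left(\sum_{j=1}^p s(j)d(j)z^{-j}\right)\sum_{m=0}^\infty z^{-mp} = \frac{\sum_{j=1}^p s(j)d(j)z^{-j}}{1-z^{-p}},
\]
valid for $|z|>1$. Substituting and clearing denominators, $g(z)=0$ becomes $1-z^{-p} = \sum_{j=1}^p s(j)d(j)z^{-j}$, and multiplying through by $z^p$ gives exactly $z^p - \sum_{j=1}^p s(j)d(j)z^{p-j} - 1 = P_{\beta,E}(z) = 0$. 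Since the factor $1-z^{-p}$ is nonzero for $|z|>1$, the equivalence $g(z)=0 \iff P_{\beta,E}(z)=0$ follows on this domain.

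**Next I would** handle the general pre-periodic case $vw^\infty$ with $k\geq 1$. Here I would split the sum as an initial block $\sum_{j=1}^k s(j)d(j)z^{-j}$ plus a shifted periodic tail $\sum_{j=k+1}^\infty s(j)d(j)z^{-j}$, applying the same geometric series argument to the tail (now a periodic sequence of period $p$ starting at index $k+1$). Combining the two rational expressions over the common denominator $1-z^{-p}$ and multiplying by $z^{k+p}$ should reproduce the full definition $P_{\beta,E}(z) = z^{k+p} - \sum_{j=1}^{k+p}s(j)d(j)z^{k+p-j} - z^k + \sum_{j=1}^k s(j)d(j)z^{k-j}$; the two correction terms $-z^k + \sum_{j=1}^k s(j)d(j)z^{k-j}$ are precisely what arises from subtracting off the contribution of the initial block when forming the geometric sum.

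**The main obstacle** I anticipate is the bookkeeping in the pre-periodic case: one must carefully track which terms land in the numerator after clearing the denominator $1-z^{-p}$, and verify that the algebra matches the somewhat elaborate defining formula for $P_{\beta,E}$ exactly rather than up to a unit or a shift. The convergence and the nonvanishing of $1-z^{-p}$ on $|z|>1$ are routine, so the entire content is the algebraic identity; I would be careful to confirm that no boundary terms are dropped when re-indexing the shifted tail, and that the lowest-period assumption on $w$ does not introduce spurious factors.
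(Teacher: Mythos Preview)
Your proposal is correct and follows essentially the same approach as the paper: both exploit the (pre-)periodic structure of $(s(j)d(j))$ to relate the infinite series to $P_{\beta,E}(z)$ via the geometric series identity $\sum_{m\geq 0} z^{-mp} = (1-z^{-p})^{-1}$, with the nonvanishing of $1-z^{-p}$ for $|z|>1$ giving the equivalence. The only cosmetic difference is direction---the paper starts from $P_{\beta,E}(z)=0$ and expands into the series, while you sum the series to a rational function whose numerator is $P_{\beta,E}$---but these are the same computation read forwards and backwards.
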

\begin{proof}
We first assume that the infinite $(\beta, E)$-expansion of $1$ is periodic. Suppose $z$ with $|z|>1$ satisfies $P_{\beta, E} (z) =0$.  Then
%\[
%z^p -1 = \sum_{i=1}^p s(i)d(i) z^{p-i},
%\]
\[
1- z^{-p} = \sum_{j=1}^p s(j)d(j) z^{-j},
\]
and thus
\[
1 = \frac{\alpha}{1-z^{-p}},
\]
where $\alpha =  \sum_{j=1}^p s(j)d(j) z^{-j}$. Using the geometric series formula yields
\[
1 = \sum_{m=0}^\infty \alpha z^{-pm} =\sum_{m=0}^\infty \sum_{j=1}^p s(j)d(j) z^{-pm-j} = \sum_{j=1}^\infty s(j)d(j) z^{-j}.
\]
The general case follows the same strategy. Suppose now that the  infinite $(\beta, E)$-expansion of $1$ is pre-periodic, and $z$ with $|z|>1$ satisfies $P_{\beta, E} (z) =0$. Then
\[
z^{k+p}-\sum_{j=1}^k s(j)d(j)z^{k+p-j} -  z^k+\sum_{j=1}^ks(j)d(j)z^{k-j} = \sum_{j=k+1}^{k+p} s(j)d(j)z^{k+p-j} 
\]
Let $\alpha_1 = \sum_{j=1}^ks(j)d(j)z^{-j}$ and $\alpha_2 =  \sum_{j=k+1}^{k+p}s(j)d(j)z^{-j}$. Then we have
\[
(z^{k+p}-z^k)(1-\alpha_1) = \alpha_2z^{k+p},
\]
and thus \[ 1-\alpha_1 = \frac{\alpha_2}{1-z^{-p}}.\]
Using the geometric series formula yields
\[
1 = \alpha_1 + \sum_{m=0}^\infty \alpha_2 z^{-pm}= \sum_{j=1}^ks(j)d(j)z^{-j} + \sum_{m=0}^\infty\sum_{j=k+1}^{k+p} s(j)d(j) z^{-pm-j},
\]
and the right hand side is $\sum_{j=1}^\infty s(j)d(j) z^{-j}$, showing \eqref{identity}. The argument can be reversed to show the opposite implication.
\end{proof}
Setting $z=\beta$, the expression for the $(\beta,E)$-expansion of $1$ shows that $P_{\beta, E} (\beta) = 0$, and it follows that all Galois conjugates of $\beta$ with $|z|>1$ satisfy
\eqref{identity}.
\begin{remark}
We do not know whether $P_{\beta, E}$ is irreducible, so there may be $z$ which satisfy $P_{\beta, E} (z)=0$ but are not Galois conjugates of $\beta$. We call such $z$ the \emph{generalized $\beta$-conjugates}, following terminology of Verger-Gaugry in the $\beta$-transformation case \cite{VG, VG2}. The generalized $\beta$-conjugates also satisfy the equation \eqref{identity}. It would be interesting to extend the results of \cite{VG, VG2} to study the distribution of the generalized $\beta$-conjugates. 
\end{remark}

\begin{remark}
If the $(\beta, E)$-expansion of $1$ is finite, the polynomial $P_{\beta, E}$ can be obtained equivalently by looking directly at this finite $(\beta, E)$-expansion;  that is, the expression $1 = \frac{s(1)d(1)}{\beta}+ \frac{s(2)d(2)}{\beta^2} + \ldots + \frac{s(n)d(n)}{\beta^n}$. In this case, $P_{\beta, E}= z^n-\sum_{j=1}^ns(j)d(j)z^{n-j} = z^n-\sum_{j=1}^ns(j)d'(j)z^{n-j}-1.$ 
\end{remark}
\begin{remark} 
Liao and Steiner studied the class of negative $\beta$-transformations in \cite{LS}. This is the subclass of generalized $\beta$-transformations where the sign of all the branches is set to $-1$. They call a number $\beta$ for which the negative $\beta$-expansion of $1$ is pre-periodic a \emph{Yrrap} number, and give examples. % and they give examples of Yrrap numbers that are not Parry. 
It is immediate that every Yrrap number is a generalized Parry number.
\end{remark}

\subsection{Upper bounds on conjugates for PCF $(\beta, E)$-transformations}
Combining Lemmas \ref{identity0} and \ref{keyidentity1}, we see that any Galois conjugate of a generalized Parry number $\beta$  with $|z|>1$ satisfies
\begin{equation} \label{wtf}
1+ \sum_{j=1}^{\infty} c_j z^{-j} = \sum_{j=0}^{\infty} c_j z^{-j}  = 0,
\end{equation}
where $c_j =s(j+1) f^{j}(1) \in [-1, 1]$ and $f$ is a PCF $(\beta, E)$-transformation. (The same is true for the generalized $\beta$-conjugates with $|z|>1$). Consider the class of functions
\[
\FFF = \{ T(w) = 1+ \sum_{j=1}^{\infty} a_j w^{j} : a_j \in [-1,1]\}.
\]
If $\lambda$ is a zero of a function in $\mathcal F$, then $z= \lambda^{-1}$ satisfies
\[
1+ \sum_{j=1}^{\infty} a_j z^{-j} = 0.
\]
Thus  if $z$ is a Galois conjugate of a generalized Parry number $\beta$ with $|z| >1$, then $z^{-1}$ is a zero of the %corresponding function in $\mathcal F$, i.e. the 
function in $\mathcal F$ with coefficients as in \eqref{wtf}. % $c_i =s(i+1) f^{i}(1) \in [-1, 1]$.
\begin{lemma} \label{zeroes}
Any zero $\lambda$ of any function in $\mathcal F$ has modulus at least $\frac{1}{2}$. If any of the $a_j$ satisfy $|a_j|<1$, then $|\lambda| > \frac{1}{2}$. 
\end{lemma}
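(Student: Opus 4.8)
The plan is to use nothing beyond the triangle inequality together with the geometric series, since the constraint $a_j \in [-1,1]$ is precisely what controls the tail of the power series. First I would note that any zero $\lambda$ satisfies $\lambda \neq 0$, because $T(0)=1\neq 0$ for every $T \in \FFF$. I would also dispose of the trivial case $|\lambda| \geq 1$ at once: there both assertions $|\lambda|\geq 1/2$ and $|\lambda|>1/2$ hold automatically, so the content of the lemma concerns zeros in the open unit disk, where the defining series converges absolutely (its radius of convergence is at least $1$ since $|a_j|\leq 1$).

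So I would assume $0<|\lambda|<1$ and rewrite the equation $T(\lambda)=0$ as $1 = -\sum_{j=1}^{\infty} a_j \lambda^j$. Taking moduli and applying the triangle inequality gives
\[
1 = \left| \sum_{j=1}^{\infty} a_j \lambda^j \right| \leq \sum_{j=1}^{\infty} |a_j|\,|\lambda|^j \leq \sum_{j=1}^{\infty} |\lambda|^j = \frac{|\lambda|}{1-|\lambda|},
\]
where the last step is the geometric series formula, valid since $|\lambda|<1$. Rearranging $1 \leq |\lambda|/(1-|\lambda|)$ yields $1-|\lambda|\leq |\lambda|$, that is $|\lambda|\geq 1/2$, which is the first claim.

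For the strict inequality, suppose $|a_{j_0}|<1$ for some index $j_0$. Since $\lambda\neq 0$ we have $|\lambda|^{j_0}>0$, so the term $|a_{j_0}|\,|\lambda|^{j_0}$ is strictly smaller than $|\lambda|^{j_0}$, and hence the second inequality in the displayed chain is strict. This gives $1 < |\lambda|/(1-|\lambda|)$ and therefore $|\lambda| > 1/2$.

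There is no genuine obstacle here; the heart of the matter is a one-line modulus estimate. The only points demanding a little care are (i) recording that zeros with $|\lambda|\geq 1$ require no argument and that the series converges exactly where the interesting zeros live, and (ii) tracking precisely where the coefficient bound $|a_j|\leq 1$ becomes strict, so that the second statement follows from the first. I would also remark that the bound $|\lambda|=1/2$ is sharp: it is attained, for example, by $T(w)=1-\sum_{j=1}^{\infty} w^j = (1-2w)/(1-w)$, whose only zero in the disk is $w=1/2$, realized with all $a_j=-1$.
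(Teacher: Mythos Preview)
Your proof is correct and follows essentially the same approach as the paper: both arguments use the triangle inequality together with the geometric series to bound $\left|\sum_{j\ge 1} a_j\lambda^j\right|$ by $|\lambda|/(1-|\lambda|)$ (the paper phrases it as a contradiction from $|\lambda|<\tfrac12$, while you argue directly). Your version is slightly more careful in disposing of the case $|\lambda|\ge 1$ and noting $\lambda\neq 0$, and your sharpness remark matches the paper's.
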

\begin{proof}
Suppose $T(\lambda) = 1+ \sum_{j=1}^{\infty} a_j \lambda^{j} =0$, where $a_j \in [-1,1]$. We argue by contradiction.
Suppose that $|\lambda| < \frac{1}{2}$. Then $|a_j\lambda^j| = |a_j| |\lambda|^j < 2^{-j}$. Thus
\[
|\sum_{j=1}^{\infty} a_j \lambda^{j}| \leq \sum_{j=1}^{\infty} |a_j \lambda^{j}| < \sum_{j=1}^{\infty} 2^{-j} = 1,
\]
%and thus .
which contradicts the fact that if $\lambda$ is a zero of $T$, then $|\sum_{j=1}^{\infty} a_j \lambda^{j}| = 1$.
 If we assume further that at least one of the $a_j$ satisfy $|a_j|<1$, then for every $\lambda$ with $|\lambda|\leq \frac12$ we have $|\sum_{j=1}^{\infty} a_j \lambda^{j}|<1$, so the same contradiction argument applies.
\end{proof}
 Clearly, this  bound is sharp by setting all the $a_j=-1$ and letting $\lambda=\frac{1}{2}$.

\begin{theorem} \label{upper}
%For the generalized $\beta$-transformation, if the orbit of $1$ is pre-periodic, 
If $\beta$ is a generalized Parry number, then all Galois conjugates $z$ of $\beta$ satisfy $|z|<2$. %In particular, this holds true for any post-critically finite $(0,1)$ generalized $\beta$-transformation. 
\end{theorem}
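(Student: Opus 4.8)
The plan is to read the bound straight off the correspondence with zeros of functions in $\mathcal{F}$ that the preceding paragraph has already set up, and then to upgrade the resulting non-strict estimate to a strict one. Let $z$ be a Galois conjugate of $\beta$ other than $\beta$ itself. Since $P_{\beta,E}$ is monic with integer coefficients and $P_{\beta,E}(\beta)=0$, the minimal polynomial of $\beta$ divides $P_{\beta,E}$, so $z$ is a zero of $P_{\beta,E}$. If $|z|\le 1$ there is nothing to prove, so I would assume $|z|>1$. Here the relation \eqref{wtf}, obtained from Lemmas \ref{keyidentity1} and \ref{identity0} by dividing through by $1-\beta/z$, is legitimate precisely because $z\neq\beta$; it says that $\lambda:=z^{-1}$ is a zero of the function $T(w)=1+\sum_{j=1}^{\infty}c_j w^j$. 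As each $c_j=s(j+1)f^{j}(1)\in[-1,1]$, we have $T\in\mathcal{F}$, so Lemma \ref{zeroes} gives $|\lambda|\ge\tfrac12$, i.e. $|z|\le 2$.

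To sharpen $|z|\le 2$ to $|z|<2$, I would appeal to the second assertion of Lemma \ref{zeroes}: it is enough to exhibit a single coefficient $c_j$ with $|c_j|<1$. Since $f$ maps $[0,1]$ into $[0,1]$, we have $|c_j|=f^{j}(1)\in[0,1]$, with $|c_j|=1$ only when $f^{j}(1)=1$. The point to observe is that a generalized Parry number with a Galois conjugate $z\neq\beta$ must be irrational: $\beta$ is an algebraic integer, being a root of the monic integer polynomial $P_{\beta,E}$, so were it rational it would be an ordinary integer and have no conjugate besides itself. For irrational $\beta\in(m,m+1)$ a direct computation gives $f(1)=\{\beta\}$ when $E(m)=1$ and $f(1)=1-\{\beta\}$ when $E(m)=-1$, and in both cases $f(1)\in(0,1)$. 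Hence $|c_1|<1$, the strict form of Lemma \ref{zeroes} applies to every zero of the fixed function $T$, and we conclude $|\lambda|>\tfrac12$, so $|z|<2$.

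The non-strict bound is immediate from the machinery already in place, so the only genuine content is the strictness step, which is where I expect the (modest) difficulty to lie. Two matters must be handled carefully there. First is the verification that at least one $c_j$ is strictly interior to $[-1,1]$, which I reduce to the irrationality of $\beta$ and the explicit value of $f(1)$; without this input one cannot rule out the extremal configuration $a_j\equiv -1$, whose associated function in $\mathcal{F}$ vanishes exactly at $\lambda=\tfrac12$, showing that Lemma \ref{zeroes} alone yields only $|z|\le 2$. Second is the implicit exclusion of $z=\beta$: for the number itself the factor $1-\beta/z$ vanishes, so \eqref{wtf} carries no information, and indeed $|\beta|$ may well exceed $2$. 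Once these two points are dispatched the argument is complete, and it simultaneously explains why the disk of radius $2$ is never actually touched by a true Galois conjugate.
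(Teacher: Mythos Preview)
Your proof is correct and follows essentially the same route as the paper: use the correspondence between conjugates with $|z|>1$ and zeros of functions in $\mathcal{F}$, apply Lemma~\ref{zeroes}, and obtain strictness by observing that $|c_1|=f(1)<1$ unless $\beta$ is an integer (in which case there are no nontrivial conjugates). Your treatment is in fact slightly more careful than the paper's, since you make explicit both the exclusion of $z=\beta$ (needed to divide by $1-\beta/z$) and the computation of $f(1)$, whereas the paper handles these points tersely.
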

\begin{proof}
%As discussed earlier in this section, 
By Lemmas \ref{identity0} and \ref{keyidentity1}, if $z$ is a Galois conjugate of $\beta$ with $|z|>1$, then $z^{-1}$ is a zero of a function in $\FFF$ whose coefficients are given by $a_j =s(j+1) f^{j}(1)$ for the appropriate $(\beta, E)$-transformation $f$. Thus we can apply Lemma \ref{zeroes} to show that $|z|< 2$. The inequality is strict because the only way we can have $|a_j|=1$ for all $j$ is if $1$ is a fixed point of $f$. This can only happen if $\beta$ is an integer, and thus does not have Galois conjugates.
\end{proof}
\begin{remark}
For $\beta$-transformations, Solomyak showed \cite{So94} that if $z$ is a Galois conjugate of $\beta$ with $|z| >1$, then $z^{-1}$ is a zero of a function in $\FFF_{[0,1]}$, where
\[
\FFF_{[0,1]} = \{ T(w) = 1+ \sum_{j=1}^{\infty} a_j w^{j} : a_j \in [0,1]\}.
\]
He used this characterization to show that $|z| \leq (\sqrt5+1)/2$ by exploiting the fact that $|a_j- \frac12| \leq \frac12$.
\end{remark}
\section{Conjugates with a prescribed argument and Solomyak's structure theorem} \label{sec4}
We investigate the maximum possible modulus of a conjugate with a prescribed argument. Let 
\[
\lambda_\phi = \min \{|\lambda| : \lambda \mbox{ is a zero of a function in }\mathcal F \mbox{ and the argument of } \lambda \mbox{ is } \phi\}. 
\]

A description of the function in $\FFF$ that attains $\lambda_\phi$ is the content of the Solomyak Structure Theorem. %\cite{So94, BBBP}. 
Such a function is called \emph{$\phi$-optimal}. This result was originally established by Solomyak \cite{So94} to analyze the zeroes of functions in $\FFF_{[0,1]}$.  The Structure Theorem was  generalized by Beaucoup, Borwein, Boyd and Pinner \cite{BBBP} to a class of power series with restricted coefficients that includes $\FFF$. The following statement is given in \cite{BBBP}.
\begin{theorem} [Solomyak Structure Theorem for $\FFF$]
Given an argument $\phi \in (0, \pi)$, there exists $\alpha = \alpha_\phi \in (0, \pi)$ and a function
\[
T_\phi(w) = 1+ \sum_{n=1}^{\infty} a_n w^{n}
\]
whose coefficients satisfy
\[
a_n = \begin{cases} 1 &  \mbox{ if } n\phi - \alpha \in (0, \pi) \pmod {2 \pi} \\
-1 &  \mbox{ if }  n\phi - \alpha \in (-\pi, 0) \pmod {2 \pi},
\end{cases}
\]
such that $T_\phi$ is $\phi$-optimal%has a zero whose modulus attains $\lambda_\phi$
; i.e. $T_\phi(\lambda_\phi e^{i\phi})=0$.
\end{theorem}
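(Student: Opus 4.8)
The plan is to recast the extremal problem as a convex-geometric question about an attainable set and then read off the optimal coefficients from a supporting-line condition. Fix $\phi \in (0,\pi)$ and, for $r \in (0,1)$, consider the attainable set
\[
S(r) = \left\{ \sum_{n=1}^\infty a_n r^n e^{in\phi} : a_n \in [-1,1] \right\} \subset \RR^2 \cong \mathbb{C}.
\]
A function $T(w) = 1 + \sum_n a_n w^n \in \FFF$ has a zero at $\lambda = re^{i\phi}$ exactly when $\sum_n a_n r^n e^{in\phi} = -1$, that is, when $-1 \in S(r)$. Hence the set of moduli of zeros of functions in $\FFF$ at argument $\phi$ is precisely $\{ r : -1 \in S(r)\}$, and $\lambda_\phi = \min\{ r : -1 \in S(r)\}$. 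The first step is therefore to understand how $-1$ first enters $S(r)$ as $r$ grows.

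Next I would record the geometry of $S(r)$. For $r<1$ the series converge absolutely, and $S(r)$ is a compact, convex set, symmetric about the origin (since each $a_n$ ranges over a symmetric interval). Its support function in direction $e^{i\psi}$ is computed termwise by the optimal bang-bang choice $a_n = \sign\cos(n\phi-\psi)$:
\[
h_r(\psi) = \max_{z \in S(r)} \operatorname{Re}(e^{-i\psi} z) = \sum_{n=1}^\infty r^n |\cos(n\phi - \psi)|.
\]
Because $h_r(\psi)$ is strictly increasing in $r$ for each $\psi$, the sets $S(r)$ are nested increasingly, and $h_r(\psi) \to \infty$ as $r \to 1^-$; this gives existence of some $r_0 < 1$ with $-1 \in S(r_0)$, so the minimum defining $\lambda_\phi$ is attained and lies in $(0,1)$.

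At the critical radius $r = \lambda_\phi$ the point $-1$ lies on $\partial S(\lambda_\phi)$, so there is a supporting direction $\psi_0$ with $-\cos\psi_0 = \operatorname{Re}(e^{-i\psi_0}(-1)) = h_{\lambda_\phi}(\psi_0)$; the boundary point of $S(\lambda_\phi)$ realizing the maximum in direction $\psi_0$ is the bang-bang point $z^\star = \sum_n \lambda_\phi^n\, \sign\cos(n\phi - \psi_0)\, e^{in\phi}$. Since $-1$ maximizes $\operatorname{Re}(e^{-i\psi_0}\,\cdot\,)$ over $S(\lambda_\phi)$ and this maximizer is (generically) unique, $z^\star = -1$, so $T_\phi(w) = 1 + \sum_n a_n w^n$ with $a_n = \sign\cos(n\phi-\psi_0)$ is $\phi$-optimal. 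Writing $\alpha = \psi_0 - \pi/2$ converts the condition $\cos(n\phi - \psi_0) > 0$ into $\sin(n\phi - \alpha) > 0$, i.e. $n\phi - \alpha \in (0,\pi) \pmod{2\pi}$, which is exactly the claimed sign pattern; and the constraint $\cos\psi_0 \le 0$ (forced by $h_{\lambda_\phi}(\psi_0) \ge 0$) places $\alpha$ in $(0,\pi)$ once the degenerate real-zero endpoints are excluded.

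I expect the main obstacle to be the uniqueness of the maximizer in direction $\psi_0$, i.e. showing the supporting face of $S(\lambda_\phi)$ is a single point rather than an edge, which is what lets me conclude $z^\star = -1$ rather than merely that $-1$ lies somewhere on that face. An edge occurs precisely when $\cos(n\phi-\psi_0)=0$ for some $n$, equivalently $n\phi - \alpha \in \{0,\pi\} \pmod{2\pi}$; for $\phi/\pi$ irrational this never happens, and the remaining rational-multiple cases should be handled either by a direct perturbation argument or by a limiting/closedness argument in $\phi$, using continuity of $\lambda_\phi$ and of the $z^\star$ construction. A secondary point requiring care is the bookkeeping that pins down $\psi_0$, hence $\alpha$, in the correct range and confirms $\alpha_\phi \in (0,\pi)$.
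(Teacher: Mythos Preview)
The paper does not give its own proof of this theorem; it simply quotes the statement from \cite{BBBP}, noting that it is the generalization to $\FFF$ of Solomyak's original result for $\FFF_{[0,1]}$. So there is no ``paper's proof'' to compare against. That said, your approach via the attainable set $S(r)$ and its support function is precisely the method used by Solomyak and by Beaucoup--Borwein--Boyd--Pinner, so your proposal matches the literature's proof in all essentials.

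One remark on the obstacle you flag. Your worry about the supporting face being an edge rather than a point is slightly misplaced for the theorem \emph{as stated}. The theorem only prescribes $a_n$ when $n\phi - \alpha$ lands in one of the open arcs $(0,\pi)$ or $(-\pi,0)$ modulo $2\pi$; the boundary cases $n\phi-\alpha\in\{0,\pi\}$ are left unconstrained (these are exactly the ``anomalous coefficients'' discussed in the paragraph following the theorem). Now the supporting face of $S(\lambda_\phi)$ in direction $\psi_0$ consists of all sums with $a_n=\sign\cos(n\phi-\psi_0)$ whenever that cosine is nonzero and $a_n\in[-1,1]$ arbitrary otherwise. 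Hence the fact that $-1$ lies on this face \emph{already} produces a sequence $(a_n)$ meeting the theorem's conditions, without needing the face to be a singleton. Uniqueness of the maximizer is only relevant for the stronger assertion that at most one coefficient is anomalous, which is not part of the stated theorem.

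Your argument that $\alpha\in(0,\pi)$ is correct and can be made explicit: if $\alpha\in\{0,\pi\}$ then $\psi_0\in\{\pi/2,3\pi/2\}$, so $\cos\psi_0=0$ and hence $h_{\lambda_\phi}(\psi_0)=\sum_n \lambda_\phi^n|\sin(n\phi)|=0$, forcing $\sin(n\phi)=0$ for all $n$, impossible for $\phi\in(0,\pi)$.
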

The sequence of coefficients $\{a_n\}$ is determined by the rotation sequence of slope $\phi$ and base point $\alpha$ with the possible exception of one coefficient $a_j$, which we call the \emph{anomalous coefficient}.  In the case of $\FFF_{[0,1]}$, where $\phi$-optimal functions have non-anomalous coefficients belonging to $\{0,1\}$, Solomyak gives explicit examples of $\phi$-optimal functions  for which the anomalous coefficient is different from all the rest.  The function $T_\phi$ is unique when $\phi/2\pi$ is irrational, and it is conjectured that when $\phi/2\pi$ is rational, $\phi$-optimal functions are never unique, see \cite{So94}.

No convenient characterizations of the anomalous coefficient or the function $\phi \to \alpha_\phi$ are currently available. Despite these drawbacks, Solomyak put his structure theorem to impressive use in \cite{So94}, obtaining results on the continuity and differentiability of the function $\phi \to \lambda_\phi$. In particular, we have the following result whose proof was given in the $\FFF_{[0,1]}$ case in \cite[Lemma 4.2]{So94}, and was observed to extend almost verbatim to $\FFF$ in \cite[Proposition 1]{BBBP}.
\begin{theorem} [\cite{So94, BBBP}] 
The function $\phi \to \lambda_\phi$ is continuous on $(0, \pi)$.
\end{theorem}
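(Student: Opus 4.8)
The plan is to prove continuity at each $\phi_0 \in (0,\pi)$ by establishing upper and lower semicontinuity separately. Throughout I use the reformulation that $\lambda_\phi$ is the smallest $r$ for which $-1$ lies in the compact convex symmetric set $S(r,\phi) = \{\sum_{j\ge1} a_j r^j e^{ij\phi} : a_j \in [-1,1]\}$, equivalently the smallest modulus of a zero in direction $\phi$ of a function in $\FFF$. Two preliminary facts are needed. First, $\lambda_\phi \in [\tfrac12, 1)$: the lower bound is Lemma \ref{zeroes}, and for the upper bound one checks that $S(r,\phi)$ contains $-1$ once $r$ is close enough to $1$, since for $\phi \in (0,\pi)$ the generators $e^{i\phi}$ and $e^{2i\phi}$ are $\RR$-linearly independent and each direction $e^{ij\phi}$ recurs with unbounded coefficient budget as $r \to 1^-$, so $S(r,\phi)$ eventually engulfs any fixed point. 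Second, the set of admissible radii is an interval $[\lambda_\phi, 1)$: if $\sum_j a_j r_0^j e^{ij\phi} = -1$ and $r_0 < r < 1$, then $b_j := a_j (r_0/r)^j$ satisfies $b_j \in [-1,1]$ and $\sum_j b_j r^j e^{ij\phi} = -1$, so a zero at radius $r_0$ forces a zero at every larger radius below $1$. Since the coefficient bound makes $\FFF$ a normal family on $|w|<1$, this interval is closed on the left, so the minimum defining $\lambda_\phi$ is attained.

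For lower semicontinuity, suppose $\phi_n \to \phi_0$ with $\lambda_{\phi_n} \to \ell$; I must show $\ell \ge \lambda_{\phi_0}$, and I may assume $\ell < 1$ since otherwise $\ell \ge 1 > \lambda_{\phi_0}$. Pick $T_n \in \FFF$ with $T_n(\lambda_{\phi_n} e^{i\phi_n}) = 0$. As $\FFF$ is a normal family on the open unit disk and is closed under locally uniform limits (the constraints $a_j \in [-1,1]$ pass to limit coefficients, and the constant term stays $1$), a subsequence of $T_n$ converges locally uniformly to some $T \in \FFF$. The points $\lambda_{\phi_n} e^{i\phi_n}$ converge to $\ell e^{i\phi_0}$, which lies in the open disk and hence in a compact set on which convergence is uniform; therefore $T(\ell e^{i\phi_0}) = \lim T_n(\lambda_{\phi_n} e^{i\phi_n}) = 0$. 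Thus $\ell$ is the modulus of a zero of $T$ in direction $\phi_0$, giving $\ell \ge \lambda_{\phi_0}$.

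The main obstacle is upper semicontinuity, for which continuity of the distance-to-$(-1)$ function does not by itself suffice; I instead perturb coefficients explicitly. Fix $r_1 \in (\lambda_{\phi_0}, 1)$. By the scaling construction above there is a configuration $(b_j)$ with $|b_j| \le (\lambda_{\phi_0}/r_1)^j < 1$ and $\sum_j b_j r_1^j e^{ij\phi_0} = -1$; note the strict slack in $b_1, b_2$. For $\phi$ near $\phi_0$ the value $V(\phi) := \sum_j b_j r_1^j e^{ij\phi}$ is continuous (the series converges uniformly since $|b_j| r_1^j \le \lambda_{\phi_0}^j$) with $V(\phi_0) = -1$. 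I then correct only the first two coefficients, solving for real $\delta_1, \delta_2$ in $\delta_1 r_1 e^{i\phi} + \delta_2 r_1^2 e^{2i\phi} = -1 - V(\phi)$. This real-linear $2\times2$ system is invertible precisely because $e^{i\phi}$ and $e^{2i\phi}$ are $\RR$-independent for $\phi \in (0,\pi)$ — this is exactly where the hypothesis enters — and its determinant is bounded away from $0$ near $\phi_0$, so $|\delta_1|, |\delta_2| \to 0$ as $\phi \to \phi_0$. For $\phi$ close enough the corrected coefficients $b_1 + \delta_1, b_2 + \delta_2$ still lie in $[-1,1]$, producing a function in $\FFF$ vanishing at $r_1 e^{i\phi}$, whence $\lambda_\phi \le r_1$. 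Letting $r_1 \downarrow \lambda_{\phi_0}$ gives $\limsup_{\phi \to \phi_0} \lambda_\phi \le \lambda_{\phi_0}$. Combined with lower semicontinuity this yields continuity at $\phi_0$, and as $\phi_0 \in (0,\pi)$ was arbitrary, on all of $(0,\pi)$.
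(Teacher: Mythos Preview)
Your argument is correct. Note that the paper itself does not supply a proof of this statement: it simply quotes the result, pointing to Solomyak's Lemma~4.2 for $\FFF_{[0,1]}$ and to \cite[Proposition~1]{BBBP} for the extension to $\FFF$, and hints that Solomyak's original argument made use of the Structure Theorem. So there is no proof in the paper to compare against directly; what you have written is a genuine self-contained substitute.

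One small point deserves tightening. Your justification that $\lambda_\phi<1$ is a sketch rather than a proof: the claim that $S(r,\phi)$ ``eventually engulfs any fixed point'' as $r\to 1^-$ is correct, but to make it rigorous you should observe that the support function of the symmetric convex body $S(r,\phi)$ in a unit direction $e^{i\theta}$ equals $\sum_{j\ge 1} r^j |\cos(j\phi-\theta)|$, and that for $\phi\in(0,\pi)$ this diverges as $r\to 1^-$ (uniformly in $\theta$, by equidistribution of $j\phi \bmod \pi$ in the irrational case and by periodicity in the rational case). With that filled in, your preliminary facts stand.

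The core of your proof---lower semicontinuity via normal families and Hurwitz-type convergence, upper semicontinuity via scaling to create strict slack $|b_1|,|b_2|<1$ and then solving a $2\times 2$ real linear system to correct the first two coefficients---is clean and complete. Your observation that the $\RR$-linear independence of $e^{i\phi}$ and $e^{2i\phi}$ holds exactly on $(0,\pi)$ pinpoints precisely where the hypothesis enters and why continuity must fail at the endpoints. This two-coefficient correction exploits the symmetry of $[-1,1]$ (one can perturb in either direction), so it is specific to $\FFF$; Solomyak's original argument for $\FFF_{[0,1]}$ cannot proceed identically, which is consistent with the paper's remark that the extension to $\FFF$ required separate (if minor) verification in \cite{BBBP}. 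Your route avoids the Structure Theorem entirely and is arguably the most elementary proof available for the class $\FFF$.
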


We apply this result in the following theorem.
\begin{theorem} \label{thm:bound}
 The quantity $\lambda_\phi$ is bounded uniformly away from $\frac 1 2$ for $\phi \in (0, \pi)$. Thus,  $\sup\{\lambda_\phi^{-1} : \phi \in (0, \pi)\} <2$.
\end{theorem}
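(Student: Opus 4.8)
The plan is to combine the continuity of $\phi \mapsto \lambda_\phi$ with a compactness argument that controls the behaviour at the two endpoints $\phi \to 0^+$ and $\phi \to \pi^-$. First I would record the pointwise strict inequality $\lambda_\phi > \tfrac12$ for every $\phi \in (0,\pi)$. By Lemma \ref{zeroes}, any zero of modulus $\tfrac12$ of a function $T(w)=1+\sum a_j w^j \in \FFF$ forces all $|a_j|=1$, and then equality in the triangle inequality $|\sum a_j \lambda^j| = 1$ forces the terms $a_j \lambda^j$ to share a common argument; comparing consecutive indices shows this is possible only if $\arg \lambda \in \{0,\pi\}$. Hence for $\phi \in (0,\pi)$ there is no zero of modulus $\tfrac12$ and argument $\phi$, so $\lambda_\phi > \tfrac12$.

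Next, using continuity of $\phi \mapsto \lambda_\phi$ on $(0,\pi)$ together with this strict inequality, on any compact subinterval $[\delta, \pi - \delta]$ the minimum of $\lambda_\phi$ is attained and exceeds $\tfrac12$. The content of the theorem is therefore concentrated entirely at the endpoints. Suppose, for contradiction, that $\inf_{\phi \in (0,\pi)} \lambda_\phi = \tfrac12$. Choosing $\phi_n$ with $\lambda_{\phi_n} \to \tfrac12$ and passing to a subsequence, $\phi_n \to \phi_* \in [0,\pi]$; continuity and the strict inequality rule out $\phi_* \in (0,\pi)$, so $\phi_* \in \{0,\pi\}$. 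I treat $\phi_* = 0$, the case $\phi_* = \pi$ being symmetric (there the limiting zero is $-\tfrac12$). Let $T_n \in \FFF$ be a function with a zero $\lambda_n = \lambda_{\phi_n} e^{i \phi_n}$, so that $\lambda_n \to \tfrac12$ while $\lambda_n \notin \RR$.

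The key mechanism is a normal families argument. The coefficients of $T_n$ lie in $[-1,1]$, so by a diagonal extraction I may assume they converge coefficientwise to some $a_j \in [-1,1]$, yielding a limit $T \in \FFF$ to which $T_n$ converges locally uniformly on $\{|w|<1\}$. Since $\lambda_n \to \tfrac12$ and $T_n \to T$ locally uniformly, $T(\tfrac12)=0$; by the modulus-$\tfrac12$ analysis above, $T$ is forced to be the single function $T(w) = 1 - \sum_{j \ge 1} w^j = (1-2w)/(1-w)$, which has a \emph{simple} zero at $\tfrac12$. Now apply Hurwitz's theorem on a small disk $D$ centred at $\tfrac12$, hence symmetric about $\RR$: for large $n$, $T_n$ has exactly one zero in $D$, namely $\lambda_n$. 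But each $T_n$ has real coefficients, so if $\lambda_n \notin \RR$ then $\overline{\lambda_n} \in D$ is a second zero of $T_n$ in $D$, a contradiction. Hence $\lambda_n \in \RR$ for large $n$, contradicting $\lambda_n = \lambda_{\phi_n} e^{i\phi_n}$ with $\phi_n \in (0,\pi)$. This shows $\inf_{\phi} \lambda_\phi > \tfrac12$, and the final assertion $\sup_\phi \lambda_\phi^{-1} < 2$ follows by taking reciprocals.

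The step I expect to be the main obstacle is precisely the endpoint analysis. Continuity alone only gives $\lambda_\phi > \tfrac12$ pointwise on the open interval, and a naive perturbative estimate of the extremal radius near $\phi = 0$ is delicate and suggestive that $\lambda_\phi$ might decay toward $\tfrac12$. The decisive structural fact that resolves this is that the only limiting extremal function $(1-2w)/(1-w)$ has a \emph{simple} and \emph{real} zero, so the real-coefficient conjugate symmetry of elements of $\FFF$ forbids approaching it along a sequence of non-real zeros; isolating this obstruction via Hurwitz's theorem is what rules out the dangerous endpoint behaviour and converts the soft compactness input into a genuine uniform bound.
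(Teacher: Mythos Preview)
Your proof is correct and takes a genuinely different route from the paper at the decisive endpoint step. Both arguments begin the same way: Lemma~\ref{zeroes} forces $a_j\in\{-1,1\}$ whenever $|\lambda|=\tfrac12$, and an equality case in the triangle inequality then pins $\lambda$ to the real axis, giving the pointwise strict inequality $\lambda_\phi>\tfrac12$ on $(0,\pi)$. The divergence is in how the endpoints $\phi\to 0^+,\pi^-$ are handled. The paper invokes Solomyak's result \cite[Lemma~4.1]{So94} that $\lim_{\phi\to 0}\lambda_\phi$ and $\lim_{\phi\to\pi}\lambda_\phi$ exist and equal a \emph{double} root of some function in $\FFF$, and then cites Solomyak's quantitative bound that the smallest such double root exceeds $0.6299$. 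You instead run a self-contained normal-families/Hurwitz argument: any subsequential limit $T\in\FFF$ of extremal functions must vanish at $\tfrac12$ (resp.\ $-\tfrac12$), which uniquely forces $T(w)=(1-2w)/(1-w)$ (resp.\ $(1+2w)/(1+w)$) with a \emph{simple} real zero; real-coefficient conjugate symmetry of the approximants then contradicts Hurwitz's single-zero count.

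The trade-off is clear. The paper's route imports more machinery but yields the explicit numerical floor $0.6299$, which feeds into the later discussion of how far below $2$ the bound on $\sup \lambda_\phi^{-1}$ should be. Your route is more elementary and entirely internal to the paper (no appeal to \cite{So94,So95} beyond continuity), but it gives only the qualitative conclusion $\inf_\phi \lambda_\phi>\tfrac12$, which is exactly what the theorem asserts. Your identification of the unique limiting function and the real-symmetry obstruction is a clean alternative to the double-root characterization.
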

\begin{proof}
First, we show that $\lambda_\phi >\frac{1}{2}$  on $(0, \pi)$. Suppose not. Then there exists $\theta \in (0, \pi)$, and $\lambda = \frac12 e^{i\theta}$, and $a_n \in [-1,1]$ so that
\[
1 + \sum a_n \lambda^n =0.
\]
Furthermore, by Lemma \ref{zeroes}, $a_n \in \{1, -1\}$ for all $n$. In particular, writing $$a_n \lambda^n= \frac{1}{2^n} e^{i\theta_n},$$
we have 
\[
1+ \sum_{n=1}^\infty \frac{1}{2^n} e^{i \theta_n} =0.
\]
Thus
\[
1+ \frac{1}{2} e^{i\theta_1} = - \sum_{n=2}^\infty \frac{1}{2^n}  e^{i \theta_n}.
\]
Letting $z_1 = 1+ \frac{1}{2} e^{i\theta_1}$. we see that $|z_1-1| = \frac{1}{2}$. Let $z_2=  -\sum_{n=2}^\infty \frac{1}{2^n}  e^{i \theta_n}$. Then $|z_2| \leq \sum_{n=2}^\infty \frac{1}{2^n} = \frac{1}{2}$. Thus, if $z_1=z_2$, then $z_1= \frac{1}{2}$, and so $\theta_1 = \pi$. Thus we have $a_1\lambda = -\frac{1}{2}$.
%\]
%Similarly, letting $z_k = 1 + \sum_{n=1}^k \frac{1}{2^n} e^{i \theta_n}$ and $z_k' =\sum_{j=k+1}^\infty \frac{1}{2^n} e^{i \theta_n}$, we see that  $|z_k-1| \leq 1- \frac{1}{2^k}$, and $|z_k'| \leq \frac{1}{2^k}$. Thus, any solution to $z_k=z'_k$ lies on the intersection of these two discs, and thus we have $z_k = \frac{1}{2^k}$. It follows that $\theta_n=\pi$ for all $n\geq 1$. Thus, for all $n\geq 1$, we have
%\[
%a_n \lambda^n = -\frac{1}{2^n}.
%\]
Since $a_1 \in \{1, -1\}$, it follows that $\lambda = \frac12$ or $\lambda = -\frac12$.
%The only two possibilities are $\lambda = \frac12$ and $a_n = -1$ for all $n$, or $\lambda = - \frac 12$ and $a_n = (-1)^{n+1}$ for all $n$. 
This contradicts the hypothesis that $\lambda = \frac12 e^{i\theta}$ for $\theta \in (0, \pi)$.

Since the map $\phi \to \lambda_\phi$ is continuous on $(0, \pi)$, all that remains is to analyze the limit of $\lambda_\phi$ as $\phi \to 0$ and $\phi \to \pi$. The argument of \cite[Lemma 4.1]{So94} shows that $\lim_{\phi \to 0} \lambda_\phi$ converges to a value which is a double root of some function in $\mathcal F$, and likewise for $\lim_{\phi \to \pi} \lambda_\phi$. Solomyak gives a rigorous argument that the smallest double root of a function in $\mathcal F$ is greater than 0.6299 on p.622 of \cite{So95}. Thus $\min \{\lim_{\phi \to \pi} \lambda_\phi, \lim_{\phi \to 0} \lambda_\phi \}\geq 0.6299 >\frac{1}{2}$. It follows that $\sup\{\lambda_\phi^{-1} : \phi \in (0, \pi)\} <2$.
\end{proof}
\begin{remark}
Note that $\lambda_0 = \lambda_\pi = \frac{1}{2}$. This is because $\lambda = \frac12$ is a zero of the function in $\FFF$ with $a_n = -1$ for all $n$, and $\lambda = - \frac 12$ is a zero of the function in $\FFF$ with $a_n = (-1)^{n+1}$ for all $n$. Thus the function $\phi \to \lambda_\phi$ is discontinuous at $0$ and $\pi$.
\end{remark}
\begin{remark}
The function $\phi \to \lambda_\phi$  is investigated numerically in \cite{BBBP}, and is plotted as figure 2 of \cite{BBBP}. The numerics suggest that $\lambda_\phi \in(0.63, 0.71)$ for $\phi \in (0, \pi)$, and that $\lim_{\phi \to \pi} \lambda_\phi \approx 0.6491$. Thus, we should expect that $\sup\{\lambda_\phi^{-1} \mid \phi \in (0, \pi)\}<1.59$. It is an open question to rigorously establish optimal bounds on this quantity.
\end{remark}

\begin{remark}
Solomyak studied differentiability properties of the boundary of $\FFF_{[0,1]}$ in some detail using the tools introduced above, giving criterion for when $\phi \to \lambda_\phi$ is smooth, and non-smooth. The extension of these results to $\FFF$ is claimed in \cite[Proposition 1]{BBBP}, with proof referred to Solomyak. %We expect that these arguments extend to $\FFF$, but we do not pursue this here. 
\end{remark}
%\begin{remark}
%While it appears that the Solomyak Structure Theorem should yield a very precise characterization of the function $\phi \to \lambda_\phi$, two issues must be resolved: both the function $\phi \to \alpha_\phi$ and the `anomolous' coefficient remain poorly understood at the time of writing. If these issues can be resolved, then we expect the Solomyak Structure Theorem could yield a much more detailed description of the boundary of $\Omega$.
%\end{remark}

%\subsection*{Path connectedness of $\Omega$}

\section{The set $\overline \Omega$} \label{sec3.1}
Recall that $\Omega$ is defined to be the set of all Galois conjugates for all generalized Parry numbers. Let $\mathcal G$ denote the set of zeroes of functions in $\FFF$. We have shown that 
$\{ z \in \Omega \mid |z| >1 \} \subset \{ z \mid z^{-1} \in \mathcal G \}$. In this section, our main result is a partial converse of this: we will show that  $\{ z \mid z^{-1} \in \mathcal G \} \subset \{ z \in \overline \Omega \mid |z| >1 \} $. %obtain that every $z\in \GGG$ can be obtained as a limit of inverses of points in $\Omega$. 
To prove this, first we require  a means of identifying when a number is a generalized Parry number. %We obtain the criterion that we need in the next subsection, before proving the main result of this section.
\subsection{Criteria for $\beta$ to be a generalized Parry number}
%We recall a condition of G\'ora for a sequence to be the itinerary of $1$ for a $(\beta, E)$-transformation, and use this to give a simple criteria for a number $\beta$ to be a generalized Parry number.  We use the notation from \S \ref{sec:itinerary}.
%Consider a sequence $w \in \Sigma_m$, and assume that $w(1)=m$.
%We say $w$ is \emph{$m$-admissible} if there exists a vector $E=(E(0), \ldots E(m))$ with entries $E(j) \in \{1,-1\}$ so that $\sigma^j w \leq_E w$ for all $j \geq 1$. 
%We have the following result of G\'ora.
%\begin{theorem}[Theorem 25, \cite{pG07}] \label{thm:admiss}
%I%f $m\geq2$ and $w$ is an $m$-admissible sequence, then there exists $(\beta, E)$ so that the  itinerary of $1$ under the $(\beta, E)$-transformation is $w$.
%\end{theorem}
%\begin{remark}
%We note that G\'ora's proof is significantly more involved than the corresponding statement for $\beta$-transformations which was proved by Parry \cite{Pa}. The hypothesis $k\geq2$ is essential: G\'ora gives examples of $1$-admissible sequences which cannot be realized as a $(\beta, E)$-expansion of $1$.
%\end{remark}
%We give a criterion for finding generalized Parry numbers, which we will use in the proof of Theorem \ref{thm:closure}. 
Suppose that $((s(1), a(1)), \ldots, (s(n), a(n)))$ is a finite sequence where $n\geq 2$, the $a(j)$ are non-negative integers with $a(n) \neq 0$, the $s(j) \in\{1,-1\}$, and $s(1)=1$. 
%Let $d(j)=a(j)$ for $j\in \{1, \ldots, n-1\}$, and 
%\[
%d(n)=
%\begin{cases}
% a(n)-1 &\text{ if } s(n)=1 \\
%a(n)+1 &\text{ if } s(n)=-1,
%\end{cases}
%\]
Let the finite sequence $(\It(1), \ldots, \It(n))$ be given by
\[
\It(j)=
\begin{cases}
 a(j) &\text{ if } s(j+1)=s(j) \\
a(j)-1 &\text{ if } s(j+1)=-s(j).
\end{cases}
\]
for $j\in\{1, \ldots, n-1\}$, and $\It(n)=a(n)$.
%\[
%\It(n)=
%\begin{cases}
% d(n)=a(n)-1 &\text{ if } s(n)=1 \\
%d(n)-1=a(n) &\text{ if } s(n)=-1.
%\end{cases}
%\]
We impose the following hypotheses on the sequence:
\begin{enumerate}
\item $\It(1) > a(j)$ for all $j\geq2$;
\item there exists a sign configuration $E \in \{1,-1\}^{m+1}$, where $m = \It(1)$, such that $s(j+1)= s(j)E(\It(j))$ for each $j\in\{1, \ldots, n-1\}$. \item if $a(j)=0$, then $s(j+1) = s(j)$, to ensure that $\It(j) \geq 0$. Thus, if any of the $a(j)$ are $0$, then $E(0)=1$.
\end{enumerate}
%We will find a $\beta$ such that $(\It(1), \ldots, \It(n-1))$ is the initial segment of the itinerary of $1$ for $f_{\beta, E}$. 
Writing $w= (\It(1), \ldots, \It(n))$, we will find a $\beta$ such that the itinerary of $1$ for $f_{\beta, E}$ is either $w^\infty$ or $w0^\infty$.

 A complete characterization of which sequences arise as the itinerary of $1$ for some $(\beta, E)$-transformation is currently open. A statement for $\It(1) \geq 2$ appears as Theorem 25 of G\'ora \cite{pG07}, although his hypotheses have been criticized by Steiner in \cite{St13}. %The argument here gives progress in this direction, although w
We do not pursue the general case here since we are investigating only PCF transformations.
 %We do not pursue the general case here, and establish only results designed for investigating PCF transformations.
 %, and so that $E(\It(n))=s(n)$. 
%Let
%\[
%w=((s(1),d(1)), \ldots, (s(n), d(n)))^\infty, \hspace{10pt}
%\It = (\It(1), \ldots, \It(n))^\infty.
%\]
%Since $\It(1) > \It(j)$ for all $j\in\{2, \ldots, n\}$, $\It$ satisfies $\sigma^k \It \leq_E \It$ for all $k \geq 0$, which is a necessary condition for $\It$ to be the itinerary of $1$ for a $(\beta, E)$-transformation. We will find $\beta$ so that $\It$ is the itinerary of $1$ for $f_{\beta, E}$, and $w$ is the $(\beta, E)$-expansion of $1$. 

We define the function
$$F(x)= s(1)a(1)+\frac{s(2)a(2)}{x} + \frac{s(3)a(3)}{x^2} + \ldots + \frac{s(n)a(n)}{x^{n-1}}.$$ 
We want to show that $F$ has a fixed point in the interval $(\It(1), \It(1)+1)$. To this end, for $j \in \{1, \ldots, n-1\}$, let
\[
R_j(x) = \sum_{i=j}^{n-1} \frac{s(i+1)a(i+1)}{x^i}.
\]
%\[
%R_j(x) = \frac{s(j+1)a(j+1)}{x^j} + \frac{s(j+2)a(j+2)}{x^{j+1}} + \ldots + \frac{s(n)a(n)}{x^{n-1}}.
%\]
\begin{lemma} \label{5.1}
For $x \in [\It(1), \It(1)+1]$, and $j \in \{1, \ldots, n-1\}$, we have 
\begin{enumerate}
\item $|R_j(x)| < \frac{1}{x^{j-1}}$
\item $\sign(R_j(x))= s(j+1)$.
\end{enumerate}
\end{lemma}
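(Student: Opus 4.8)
The plan is to prove the two claims separately: claim (1) by a direct geometric-series estimate, and claim (2) by a downward induction on $j$ that invokes (1). Throughout I write $m = \It(1)$ and use that $x \in [m, m+1]$, so $x \ge m$.

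First I would record the consequence of hypothesis (1): since the $a(i)$ are non-negative integers and $\It(1) > a(i)$ for every $i \ge 2$, we have $0 \le a(i) \le m-1$ for all $i \ge 2$. As $x \ge m$, this gives $a(i+1) \le m-1 \le x-1$ for each $i$ with $j \le i \le n-1$ (every index $i+1$ appearing in $R_j$ exceeds $1$). For (1), the triangle inequality followed by this bound yields
\[
|R_j(x)| \le \sum_{i=j}^{n-1} \frac{a(i+1)}{x^i} \le (x-1)\sum_{i=j}^{n-1} x^{-i}.
\]
Summing the finite geometric series and using $(x-1)/(1 - x^{-1}) = x$ collapses the right-hand side to $x^{-(j-1)}\bigl(1 - x^{-(n-j)}\bigr)$, which is strictly smaller than $x^{-(j-1)} = 1/x^{j-1}$ since $x > 1$ and $n - j \ge 1$. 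This gives the strict bound in (1) with no case analysis.

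For (2), I would exploit the recursion $R_j(x) = s(j+1)a(j+1)x^{-j} + R_{j+1}(x)$, with the empty-sum convention $R_n \equiv 0$, and induct downward from $j = n-1$ to $j = 1$. The base case $j = n-1$ is immediate: $R_{n-1}(x) = s(n)a(n)x^{-(n-1)}$ with $a(n) \ge 1$, so $\sign R_{n-1}(x) = s(n)$. For the inductive step I split on the leading coefficient. If $a(j+1) \ge 1$, the leading term has modulus $a(j+1)x^{-j} \ge x^{-j}$, while (1) applied at index $j+1$ bounds the tail by $|R_{j+1}(x)| < x^{-j}$; since $R_j(x)$ is real, a term of modulus at least $x^{-j}$ cannot be cancelled by a remainder of strictly smaller modulus, so $\sign R_j(x) = \sign(s(j+1)a(j+1)) = s(j+1)$. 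If instead $a(j+1) = 0$, the leading term vanishes, so $R_j(x) = R_{j+1}(x)$; here hypothesis (3) forces $s(j+2) = s(j+1)$, and combining this with the inductive hypothesis $\sign R_{j+1}(x) = s(j+2)$ gives $\sign R_j(x) = s(j+1)$.

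The only delicate point is the vanishing-coefficient case $a(j+1) = 0$, where the dominance argument is unavailable and one must instead propagate the sign through the zero coefficient; this is precisely the step that consumes hypothesis (3), keeping $s(j+1)$ aligned with $s(j+2)$. Everything else reduces to the geometric-series bound of (1) together with the elementary observation that in a real sum a summand of modulus at least $x^{-j}$ dominates a remainder of modulus strictly less than $x^{-j}$.
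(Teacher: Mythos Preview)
Your proof is correct and follows essentially the same strategy as the paper: the same geometric-series estimate for (1), and for (2) the same dominant-term-versus-tail comparison using hypothesis (3) to pass through zero coefficients. The only cosmetic difference is that you organize (2) as a downward induction handling zero coefficients one at a time, whereas the paper skips directly to the first nonzero coefficient $a(j+k)$ and observes $s(j+k)=s(j+1)$ in one stroke; the underlying argument is identical.
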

\begin{proof}
Let $N= \max\{a(2), \ldots, a(n)\}$. Then, since $N \leq \It(1)-1 \leq x-1$,
\[
|R_j(x)| < \sum_{i=j}^{\infty} \frac{N}{x^i} =\frac{1}{x^{j-1}} \frac{N}{x-1}\leq \frac{1}{x^{j-1}}.
\]
Now take the first $k\geq1$ so that $a(j+k)\neq0$. If $k \geq 2$, then since $a(j+1) = \cdots = a(j+k-1)=0$, we have $s(j+k)= s(j+1)$. We have
\[
R_j(x)= \frac{s(j+k)a(j+k)}{x^{j+k-1}} + R_{j+k}(x),
\]
and thus
\[
\frac{s(j+k)a(j+k)}{x^{j+k-1}} -\frac{1}{x^{j+k-1}}< R_j(x) < \frac{s(j+k)a(j+k)}{x^{j+k-1}}+\frac{1}{x^{j+k-1}}.
\]
Since $a(j+k)\geq1$, it follows that $\sign(R_j(x))=s(j+k)=s(j+1)$.
\end{proof}
It follows immediately from (2) in Lemma \ref{5.1} that 
\begin{equation} \label{useful0}
|R_j(x)| = s(j+1) R_j(x).% \\ %&=  \frac{a(j+1)}{x^j} + \frac{s(j+1)s(j+2)a(j+2)}{x^{j+1}} + \ldots + \frac{s(j+1)s(n)a(n)}{x^{n-1}}.
\end{equation}
%The hypotheses are rather complicated, but we describe an easy way to verify them in a remark that follows.
\begin{lemma} \label{lem:criteria}
There exists $\beta \in(\It(1), \It(1)+1)$ such that
\[
\beta = s(1)a(1)+\frac{s(2)a(2)}{\beta} + \frac{s(3)a(3)}{\beta^2} + \ldots + \frac{s(n)a(n)}{\beta^{n-1}}
\]
\end{lemma}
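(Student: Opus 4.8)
The plan is to apply the Intermediate Value Theorem to the continuous function $G(x) = F(x) - x$ on the closed interval $[\It(1), \It(1)+1]$, and to deduce the existence of a fixed point from a sign change of $G$ at the two endpoints. Continuity of $F$ on this interval is immediate, since $F$ is a finite sum of terms $s(i+1)a(i+1)/x^{i}$ and $x$ stays bounded away from $0$ (in fact hypothesis (1) together with $a(n) \geq 1$ forces $\It(1) \geq 2$). So it suffices to show $G(\It(1)) > 0$ and $G(\It(1)+1) < 0$.

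First I would isolate the constant term of $F$. Since $s(1) = 1$, we have $F(x) = a(1) + R_1(x)$, where $R_1$ is precisely the tail studied in Lemma \ref{5.1}. The value of $a(1)$ is determined by the relation between $s(1)$ and $s(2)$ through the definition of $\It(1)$: if $s(2) = s(1) = 1$ then $\It(1) = a(1)$, so $a(1) = \It(1)$; whereas if $s(2) = -s(1) = -1$ then $\It(1) = a(1) - 1$, so $a(1) = \It(1)+1$. I would carry the two cases in parallel.

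The key step is to feed in Lemma \ref{5.1} at $j = 1$, which gives $|R_1(x)| < 1$ and $\sign(R_1(x)) = s(2)$ throughout $[\It(1), \It(1)+1]$. In the case $s(2) = 1$ we get $R_1(x) \in (0,1)$ with $a(1) = \It(1)$, so $F(x) = \It(1) + R_1(x) \in (\It(1), \It(1)+1)$; in the case $s(2) = -1$ we get $R_1(x) \in (-1,0)$ with $a(1) = \It(1)+1$, so $F(x) = \It(1)+1 + R_1(x) \in (\It(1), \It(1)+1)$ again. Thus in both cases $F$ maps $[\It(1), \It(1)+1]$ strictly into its own interior. Evaluating $G$ at the endpoints then yields $G(\It(1)) = F(\It(1)) - \It(1) > 0$ and $G(\It(1)+1) = F(\It(1)+1) - (\It(1)+1) < 0$, and the Intermediate Value Theorem produces a $\beta$ in the open interval with $G(\beta) = 0$, i.e. $\beta = F(\beta)$, which is exactly the asserted identity.

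I do not expect a genuine analytic obstacle here: once Lemma \ref{5.1} is available the argument is a single application of the Intermediate Value Theorem. The only point demanding care is the bookkeeping of the constant term, namely converting the two possibilities $\sign(R_1(x)) = s(2) = \pm 1$ into the correct value of $a(1)$, so that $F$ is verified to land inside $(\It(1), \It(1)+1)$ no matter which branch occurs. This sign tracking is precisely where the normalization $s(1) = 1$ and the defining relation for $\It(1)$ are used.
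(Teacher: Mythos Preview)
Your proposal is correct and follows essentially the same approach as the paper: both write $F(x)=a(1)+R_1(x)$, split into the two cases $s(2)=\pm 1$ to identify $a(1)$ in terms of $\It(1)$, invoke Lemma~\ref{5.1} at $j=1$ to get $R_1(x)\in(0,1)$ or $(-1,0)$, and conclude via the Intermediate Value Theorem. The only cosmetic difference is that you phrase the IVT step as a sign change of $G=F-\mathrm{id}$ at the endpoints, whereas the paper phrases it as $F$ mapping the closed interval into its open interior; these are equivalent.
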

\begin{proof}
We show that $F:[\It(1), \It(1)+1] \to [\It(1), \It(1)+1]$. Note that $F(x) = a(1) + R_1(x)$. There are two cases. First suppose $s(2)=1$. Then $E(\It(1))=1$ and $\It(1) = a(1)$. Since $0<R_1(x)<1$, we have
\[
\It(1)+1 = a(1)+1 > a(1) + R_1(x) > a(1)= \It(1). 
\]
%By the Intermediate Value Theorem, we thus have a value $\beta \in (\It(1), \It(1)+1)$ satisfying $F(\beta)-\beta=0.$
Now suppose $s(2)=-1$. Then $E(\It(1))=-1$ and $\It(1) = a(1)-1$. Since $0>R_1(x)>-1$, we have
\[
\It(1)+1 = a(1) >  a(1) + R_1(x) > a(1) -1 = \It(1). 
\]

Thus, in both cases, the image of $F$ is contained in $(\It(1), \It(1)+1)$.  Considering the map as $F:[\It(1), \It(1)+1] \to [\It(1), \It(1)+1]$, it follows from the Intermediate Value Theorem that $F$ has a fixed point $\beta$. Clearly, $\beta \notin \{\It(1), \It(1)+1\}$.% We must have $\beta \in (\It(1), \It(1)+1)$.  %\in [\It(1), \It(1)+1]$. Since $F([\It(1), \It(1)+1]) \subset (\It(1), \It(1)+1)$, so $\beta \in(\It(1), \It(1)+1)$.
\end{proof}
%For the rest of this section
From now on, we fix $\beta \in(\It(1), \It(1)+1)$ provided by Lemma \ref{lem:criteria}, and let $f= f_{\beta, E}$. Recall that $[0,1]$ is partitioned into intervals $I_0 = \left[0, 1/\beta \right ]$, $I_1 = \left(1/\beta, 2/\beta \right ]$,$\ldots$, $I_m = \left(m/\beta, 1 \right ]$, where $m=\It(1)$.
%$I_0 = \left[0, \frac{1}{\beta} \right ]$, $I_1 = \left(\frac{1}{\beta}, \frac{2}{\beta} \right ]$, $\ldots$, $I_m = \left(\frac{m}{\beta}, 1 \right ]$.
%We let $I_1, \ldots, I_m$ be the partition i

 \begin{lemma} \label{useful1}
 For $k\in \{1, \ldots, n-1\}$, we have 
 \begin{enumerate}[(i)]
  \item $f^{k-1}(1)\in I_{\It(k)}$
 \item $f^{k-1}(1)= \frac{1}{\beta} \left(a(k) + \beta^{k-1}E(\It(k))|R_k(\beta)| \right)$

 \end{enumerate}
 \end{lemma}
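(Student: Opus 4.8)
The plan is to prove both parts simultaneously by induction on $k$, treating the explicit formula (ii) as the primary inductive statement and deducing the location (i) from it. The underlying principle is that the hypotheses on the signs and digits have been arranged precisely so that the orbit of $1$ realizes the prescribed itinerary $\It$, with $a(k)$ playing the role of the digit $d(k)$. Indeed, once $f^{k-1}(1)\in I_{\It(k)}$ is known, the definition of $f$ forces the digit to be $a(k)$ in both sign cases, since $\It(k)=a(k)$ when $E(\It(k))=1$ and $\It(k)=a(k)-1$ when $E(\It(k))=-1$. This interlocking is the reason (i) and (ii) must be carried along together: (i) at level $k$ is exactly what tells us which branch of $f$ to apply when passing from $f^{k-1}(1)$ to $f^{k}(1)$.

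I would first dispatch the base case $k=1$, where $f^0(1)=1$ and (ii) asserts $\beta = a(1)+E(\It(1))|R_1(\beta)|$. Using \eqref{useful0} to write $|R_1(\beta)|=s(2)R_1(\beta)$, together with the sign hypothesis $s(2)=s(1)E(\It(1))=E(\It(1))$ and $s(1)=1$, this collapses to $\beta = s(1)a(1)+R_1(\beta)=F(\beta)$, which is precisely the fixed-point equation supplied by Lemma \ref{lem:criteria}. For the deduction of (i) from (ii) at a general level $k$, the bound $|R_k(\beta)|<\beta^{-(k-1)}$ from Lemma \ref{5.1}(1) gives $0<\beta^{k-1}|R_k(\beta)|<1$, where the strict positivity uses $\sign R_k(\beta)=s(k+1)\neq 0$ from Lemma \ref{5.1}(2). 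Substituting this into the two cases of (ii) and recalling $\It(k)=a(k)$ or $a(k)-1$ places $f^{k-1}(1)$ strictly between $\It(k)/\beta$ and $(\It(k)+1)/\beta$, hence inside $I_{\It(k)}$; the strictness of both inequalities is what keeps the point off the partition endpoints.

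For the inductive step, assuming (i) and (ii) at level $k$ with $k\leq n-2$, I apply the relevant branch of $f$. The pleasant feature is that for both $E(\It(k))=\pm 1$ the digit terms cancel and one obtains the single uniform expression $f^{k}(1)=\beta^{k-1}|R_k(\beta)|$. It then remains to re-expand this into the desired form: write $|R_k(\beta)|=s(k+1)R_k(\beta)$ via \eqref{useful0}, split off the leading term $s(k+1)a(k+1)/\beta^{k}$ of $R_k$, and convert the tail using $s(k+1)R_{k+1}(\beta)=s(k+1)s(k+2)|R_{k+1}(\beta)|=E(\It(k+1))|R_{k+1}(\beta)|$, which again combines Lemma \ref{5.1}(2) with the sign hypothesis. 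This delivers exactly the formula (ii) at level $k+1$. I expect the sign bookkeeping to be the main obstacle: the whole recursion hinges on hypothesis (2), $s(j+1)=s(j)E(\It(j))$, which is what lets every stray factor $s(\cdot)$ be traded for an $E(\It(\cdot))$ at the right moment, and care is needed to keep the manipulation of $|R_j|$, $\sign R_j$, and the leading-term split consistent across the two branches.
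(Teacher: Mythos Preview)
Your proposal is correct and follows essentially the same inductive scheme as the paper: establish (ii) at the base via the fixed-point equation for $\beta$, use (i) to select the correct branch of $f$ and obtain the uniform formula $f^{k}(1)=\beta^{k-1}|R_k(\beta)|$, re-expand via $|R_k(\beta)|=s(k+1)R_k(\beta)$ and the sign relation $s(k+1)s(k+2)=E(\It(k+1))$ to get (ii) at the next level, and deduce (i) from (ii) using the bound $|R_k(\beta)|<\beta^{-(k-1)}$ together with the two cases $\It(k)=a(k)$ or $\It(k)=a(k)-1$. The only cosmetic difference is that the paper records ``$1\in I_{\It(1)}$'' as immediate at the base, whereas you fold the base case of (i) into your general ``(ii) $\Rightarrow$ (i)'' argument.
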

 \begin{proof}
We argue recursively. For $k=1$, it is immediate that $1\in I_{\It(1)}$, and we have
 \begin{align*}
1 &= \frac{s(1)a(1)}{\beta}+ \frac{s(2)a(2)}{\beta^2} + \ldots + \frac{s(n)a(n)}{\beta^n}\\
&= \frac{a(1)}{\beta}+ \frac{1}{\beta}R_1(\beta),
\end{align*}
and since $R_1(\beta) = s(2)|R_1(\beta)|=E(\It(1))|R_1(\beta)|$, we are done. 

Now we show that if $k \in \{1, \ldots, n-1\}$, and $(i)$ and $(ii)$ hold true for $f^{k-1}(1)$, then
\begin{equation} \label{useful}
f^k(1) =  \beta^{k-1}|R_k(\beta)|.
\end{equation}
There are two cases:

 Case (a): Suppose that $s(k+1)=s(k)$. Then $a(k)=\It(k)$, and $E(\It(k))=1$.  Since $E(\It(k))=1$, for $y\in I_{\It(k)}$, $f(y)= \beta y- a(k)$, and thus applying $f$ to $f^{k-1}(1) \in  I_{\It(k)}$, it follows from the expression $(ii)$ that $f^k(1) = \beta^{k-1}|R_k(\beta)|$.

Case (b): Suppose that $s(k+1)=-s(k)$. Then $a(k)-1=\It(k)$, and $E(\It(k))=-1$. Since $E(\It(k))=-1$, for $y\in I_{\It(k)}$, $f(y)=a(k) -\beta y$. Since $f^{k-1}(1) \in  I_{\It(k)}$, it follows from the expression $(ii)$ that $
f^k(1) = - E(\It(k))\beta^{k-1}|R_k(\beta)| =  \beta^{k-1}|R_k(\beta)|.$

Now fix $k\in \{1, \ldots, n-2\}$, and suppose that $(i)$ and $(ii)$ hold true for $f^{k-1}(1)$. It follows from \eqref{useful0} and \eqref{useful} that
\begin{align*}
f^k(1) &= \beta^{k-1}|R_k(\beta)| = \beta^{k-1}s(k+1)R_k(\beta) \\ 
&= \beta^{k-1}s(k+1)\left( \frac{s(k+1)a(k+1)}{{\beta}^k} + R_{k+1}(\beta) \right) \\
%&= \frac{1}{\beta}\left (a(k+1)+\beta^ks(k+1)\left (\frac{s(k+2)a(k+2)}{\beta^{k+1}} + \ldots + \frac{s(n)a(n)}{\beta^{n-1}} \right )\right)\\
&= \frac{1}{\beta}\left (a(k+1)+\beta^ks(k+1)R_{k+1}(\beta) \right )\\
&= \frac{1}{\beta}\left (a(k+1)+\beta^ks(k+1)s(k+2)|R_{k+1}(\beta)| \right ).
\end{align*}
Since $s(k+1)s(k+2)= s(k+1)^2E(\It(k+1))$, we have established the formula $(ii)$ for $f^k(1)$. 

Now, we show that $(ii)$ implies $(i)$. Again, there are two cases. If $s(k+2)=s(k+1)$, then $a(k+1)= \It(k+1)$, and $E(\It(k+1))=1$, so
\[
\frac{a(k+1)}{\beta} < f^{k}(1)= \frac{1}{\beta}\left (a(k+1) + \beta^{k}|R_{k+1}(\beta)| \right )< \frac{a(k+1)+1}{\beta},
\]
and so $f^{k}(1) \in I_{\It(k+1)}$.

In the case that $s(k+2)=-s(k+1)$, then $a(k+1)-1= \It(k+1)$, and $E(\It(k+1))=-1$, so
\[
\frac{a(k+1)-1}{\beta} < f^{k}(1)= \frac{1}{\beta}\left (a(k+1) - \beta^{k}|R_{k+1}(\beta)| \right )< \frac{a(k+1)}{\beta},
\]
and so $f^{k}(1) \in I_{\It(k+1)}$. 

This shows that both $(i)$ and $(ii)$ are true for $f^k(1)$, which completes the proof.
 \end{proof}
 \begin{lemma} \label{Parry}
 We have $f^{n-1}(1)=a(n)/\beta$. Thus $f$ is PCF, and so $\beta$ is a generalized Parry number.
 \end{lemma}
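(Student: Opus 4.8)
The plan is to evaluate the identity \eqref{useful} at the largest admissible index and then track where the resulting point is mapped by $f$. First I would apply \eqref{useful} with $k = n-1$; this is legitimate because Lemma~\ref{useful1} establishes $(i)$ and $(ii)$ for $f^{k-1}(1)$ for every $k \in \{1,\ldots,n-1\}$, in particular for $f^{n-2}(1)$, which is exactly the hypothesis needed to run the step producing \eqref{useful} at $k=n-1$. This gives $f^{n-1}(1) = \beta^{n-2}|R_{n-1}(\beta)|$. The tail $R_{n-1}$ consists of a single term: from $R_j(x) = \sum_{i=j}^{n-1} s(i+1)a(i+1)/x^i$ we read off $R_{n-1}(\beta) = s(n)a(n)/\beta^{n-1}$, whence $|R_{n-1}(\beta)| = a(n)/\beta^{n-1}$ since $|s(n)|=1$ and $a(n)\geq 1$. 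Substituting yields $f^{n-1}(1) = \beta^{n-2}\cdot a(n)/\beta^{n-1} = a(n)/\beta$, which is the first assertion.

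Next I would show $f^{n}(1) \in \{0,1\}$. Hypothesis (1) gives $a(n) < \It(1) = m$, and the standing assumption $a(n)\neq 0$ gives $a(n)\geq 1$, so $1 \leq a(n) \leq m-1$. Hence $a(n)/\beta$ is the right endpoint of $I_{a(n)-1}$ and so lies in $I_{a(n)-1}$. Applying the defining formula for $f$ on that interval, if $E(a(n)-1)=1$ then $f(a(n)/\beta) = a(n) - (a(n)-1) = 1$, whereas if $E(a(n)-1)=-1$ then $f(a(n)/\beta) = -a(n) + (a(n)-1) + 1 = 0$. In either case $f^{n}(1) \in \{0,1\}$.

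Finally I would deduce finiteness of the orbit. If $f^{n}(1)=1$ the orbit of $1$ is periodic. If $f^{n}(1)=0$, then $0$ is either a fixed point (when $E(0)=1$) or satisfies $f(0)=1$ (when $E(0)=-1$), so in both situations the forward orbit of $1$ remains inside the finite set $\{1, f(1), \ldots, f^{n-1}(1), 0\}$. Either way $\{f^{j}(1) : j\geq 0\}$ is finite, so $f$ is PCF, and by definition $\beta$ is a generalized Parry number.

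The arithmetic here is short, so the only place that genuinely needs care is the second step: correctly identifying the interval $I_{a(n)-1}$ containing $a(n)/\beta$ and reading off the matching branch of $f$. This is precisely where hypothesis (1) and the non-vanishing of $a(n)$ are used, since together they guarantee that $a(n)/\beta$ is an honest partition endpoint whose image is forced to be $0$ or $1$, rather than an interior point with an uncontrolled orbit.
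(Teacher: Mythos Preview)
Your proof is correct and follows essentially the same route as the paper's: invoke Lemma~\ref{useful1} at $k=n-1$ to obtain \eqref{useful}, compute the single-term tail $R_{n-1}(\beta)$ to get $f^{n-1}(1)=a(n)/\beta$, observe that $a(n)/\beta$ is the right endpoint of $I_{a(n)-1}$, and conclude $f^{n}(1)\in\{0,1\}$ according to the sign $E(a(n)-1)$. Your write-up is slightly more explicit in verifying $1\le a(n)\le m-1$ and in treating the $f^n(1)=0$ case, but the argument is the paper's.
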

 \begin{proof}
Since by Lemma \ref{useful1}, $(i)$ and $(ii)$ hold for $f^{n-2}(1)$, the equation \eqref{useful} shows that $f^{n-1}(1)=\beta^{n-2}|R_{n-1}(\beta)|= a(n)/\beta$. Since $f(a(n)/\beta)$ is either $1$ or $0$ (noting that $a(n)/\beta \in I_{a(n)-1}$, so this depends only on whether $E(a(n)-1)$ is positive or negative), we have shown that $1$ has a finite orbit, and hence $f$ is PCF.
 \end{proof}
The previous results allow us to give the following simple criterion for finding generalized Parry numbers.
%We now appeal to the rules for using $f$ to generate the $(\beta, E)$-expansion of $1$, and we see that since the itinerary of $1$ is the sequence $\It$, then it has the $(\beta, E)$-expansion $w$ as required.
 
%\newpage

%Since $w$ is periodic, this will show that $\beta$ is a simple generalized Parry number. There is one detail to check: we require that $w$ is the valid  $(\beta, E)$-expansion for $1$. %The hypotheses of the lemma were set up so that $\It$ satisfies $\sigma^k \It \leq_E \It$ for all $k \geq 0$, which is a necessary condition. 
%We now show that  $w$ is the $(\beta, E)$-expansion of $1$.

%The hypotheses of the lemma were set up so that $w$ is the $(\beta, E)$-expansion corresponding to the $\It(1)$-admissible sequence $(\It(1), \ldots, \It(n))^\infty$. So by Theorem \ref{thm:admiss}, $w$ is the $(\beta, E)$-expansion of $1$. 

\begin{theorem} \label{thm:criterion}
Suppose that $M(1), \ldots M(n)$ are distinct non-zero integers such that $M(1)\geq 2$, $|M(j)|+1 <M(1)$ for all $j\geq2$, and $|M(j)|\neq |M(k)|-1$ for all $j, k$.  Then the equation
\begin{equation} \label{eqn2}
x = M(1)+\frac{M(2)}{x} + \frac{M(3)}{x^2} + \ldots + \frac{M(n)}{x^{n-1}}
\end{equation}
has a solution $\beta>1$ which is a generalized Parry number. 
\end{theorem}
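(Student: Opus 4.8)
The plan is to deduce the theorem from the apparatus already built in Lemmas~\ref{5.1}--\ref{Parry} by encoding the integers $M(1), \dots, M(n)$ as sign--digit data. First I would set $a(j) = |M(j)|$ and $s(j) = \sign(M(j))$, so that $M(j) = s(j)a(j)$. Because each $M(j) \neq 0$ we get $a(j) \geq 1$, so in particular $a(n) \neq 0$ and hypothesis~(3) preceding Lemma~\ref{5.1} is vacuous; because $M(1) \geq 2 > 0$ we get $s(1) = 1$, as the standing assumptions on the sequence require. Under this dictionary, the equation~\eqref{eqn2} is exactly the fixed-point equation of Lemma~\ref{lem:criteria}, namely $\beta = s(1)a(1) + s(2)a(2)/\beta + \cdots + s(n)a(n)/\beta^{n-1}$, so a root $\beta > 1$ of \eqref{eqn2} is exactly a fixed point of the function $F$ introduced there.

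With the encoding in place, the proof reduces to checking hypotheses~(1)--(3) for the sequence $((s(1),a(1)), \dots, (s(n),a(n)))$. Granting these, Lemma~\ref{lem:criteria} supplies $\beta \in (\It(1), \It(1)+1)$ solving \eqref{eqn2}; since $\It(1) \geq M(1) - 1 \geq 1$ we have $\beta > 1$, and Lemmas~\ref{useful1} and~\ref{Parry} then show that $f_{\beta,E}$ is PCF, so $\beta$ is a generalized Parry number, as required. Hypothesis~(1), that $\It(1) > a(j)$ for $j \geq 2$, is immediate from $|M(j)| + 1 < M(1)$: the defining rule gives $\It(1) \in \{M(1)-1,\, M(1)\}$, whence $\It(1) \geq M(1) - 1 > |M(j)| = a(j)$. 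The same bound shows $\It(j) \in \{0, \dots, \It(1)\}$ for every $j$, so that $E(\It(j))$ makes sense.

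The one genuinely non-trivial step, and the one I expect to be the main obstacle, is hypothesis~(2): producing a single $E \in \{1,-1\}^{\It(1)+1}$ with $E(\It(j)) = s(j)s(j+1)$ for all $j \in \{1, \dots, n-1\}$. This is a consistency (well-definedness) question, since one must rule out indices $j \neq j'$ with $\It(j) = \It(j')$ but $s(j)s(j+1) \neq s(j')s(j'+1)$. Unwinding the definition of $\It$, a step with $s(j+1)=s(j)$ forces $E(|M(j)|) = +1$, while a step with $s(j+1) = -s(j)$ forces $E(|M(j)|-1) = -1$. Hence the only way a clash can arise is between a $+1$ demanded at a position $|M(j)|$ and a $-1$ demanded at a position $|M(j')| - 1$ that coincide, i.e.\ $|M(j)| = |M(j')| - 1$, which is precisely what the hypothesis $|M(j)| \neq |M(k)| - 1$ excludes. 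The set of positions receiving $+1$ and the set receiving $-1$ are therefore disjoint, so $E$ may be defined consistently on them and arbitrarily (say $+1$) on the remaining positions of $\{0, \dots, \It(1)\}$. I would expect the distinctness of the $M(j)$ not to enter this consistency argument; it appears to be an auxiliary hypothesis, the decisive relation for constructing $E$ being $|M(j)| \neq |M(k)| - 1$.
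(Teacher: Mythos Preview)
Your argument is correct and follows the paper's approach almost verbatim: set $a(j)=|M(j)|$, $s(j)=\sign(M(j))$, verify the standing hypotheses~(1)--(3), and invoke Lemmas~\ref{lem:criteria} and~\ref{Parry}.

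The one place you diverge from the paper is in verifying hypothesis~(2). The paper asserts that the $\It(j)$ are all distinct and therefore $E$ can be chosen freely at each position $\It(j)$; you instead analyse the consistency constraints directly and show that a clash would force $|M(j)|=|M(j')|-1$. Your route is actually the more robust one: the paper's distinctness claim can fail under the stated hypotheses (for instance $M(2)=3$, $M(3)=-3$, $M(4)=5$ with $M(1)$ large gives $\It(2)=\It(3)=2$), whereas your argument shows that even when $\It(j)=\It(j')$ the demanded value of $E$ at that position is the same. Your closing remark is also correct: the distinctness of the $M(j)$ plays no role, and the operative hypothesis for building $E$ is $|M(j)|\neq|M(k)|-1$.
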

\begin{proof}
Let $a(j)= |M(j)| \geq 1$ and $s(j) = \sign(M(j))$, and define $\It(j)$ as we did earlier in this section. Our hypotheses imply that $\It(1)> a(j)$ for all $j\geq2$. Since all the $\It(j)$ are distinct, we have freedom to choose a vector $E$ whose entries in the positions $E(\It(j))$ leads to the sequence of signs $s(j)$.  Thus applying Lemma \ref{lem:criteria}, \eqref{eqn2} has a solution $\beta \in (\It(1), \It(1)+1)$, and so by Lemma \ref{Parry}, $f_{\beta, E}$ is PCF, and thus $\beta$ is a generalized Parry number.
\end{proof}
\begin{remark}
We mention some other classes of numbers that are known to be Parry numbers, and hence generalized Parry numbers. Schmidt proved that if $\beta$ is a Pisot number, %(i.e. $\beta>1$ has all conjugates with modulus less than $1$
 then $\beta$ is Parry \cite{kS80}.  Whether Salem numbers are Parry is a challenging open problem \cite{Boy, wT14}, first raised by Schmidt in \cite{kS80}.  Boyd proved that degree 4 Salem numbers are Parry \cite{dB89}. Numerical evidence and hueristic arguments by Thurston \cite{wT14} and Boyd suggest that most higher degree Salem numbers are not Parry, but it seems to be very difficult to find even a single rigorous example of this phenomenon.
\end{remark}

\subsection{Characterization of $\overline \Omega$}  %the generalized Parry polynomial for PCF generalized $\beta$-transformations.
We now prove our main result about $\overline \Omega$.
\begin{theorem} \label{thm:closure}
The set $\overline \Omega$ is $\mathbb D \cup \{ z \mid |z| >1 \text{ and } z^{-1} \in \mathcal G \}$, where $\mathbb D =\{z \mid |z| \leq 1\}$.
\end{theorem}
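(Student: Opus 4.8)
The plan is to establish the two inclusions separately. The inclusion $\overline{\Omega} \subseteq \mathbb{D} \cup \{z : |z|>1,\ z^{-1}\in\mathcal{G}\}$ I expect to be routine. First, $\Omega$ itself sits inside this set: a conjugate $z$ with $|z|\le 1$ lies in $\mathbb{D}$, and a conjugate with $|z|>1$ has $z^{-1}\in\mathcal{G}$ by Lemmas \ref{identity0} and \ref{keyidentity1}. It then suffices to show the right-hand side $S$ is closed. The point is that $\mathcal{G}\cap\{|w|<1\}$ is closed in the open disk: $\FFF$ is a normal family there (since $|T(w)|\le (1-|w|)^{-1}$), every $T\in\FFF$ has $T(0)=1$, so a locally uniform limit of elements of $\FFF$ is again a nonzero element of $\FFF$, and Hurwitz's theorem then sends limits of zeros to zeros. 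Transporting this through $w\mapsto w^{-1}$ and using that any convergent sequence from $S$ with moduli bigger than $1$ has limit of modulus in $[1,2]$ (by Lemma \ref{zeroes}, $z^{-1}\in\mathcal{G}$ forces $|z|\le 2$), one checks directly that $S$ is closed, hence contains $\overline{\Omega}$.

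For the reverse inclusion I would treat the disk and the outer annulus uniformly. The reduction is: it suffices to show that for every polynomial $T_N(w)=1+\sum_{j=1}^N a_j w^j$ with $a_j\in[-1,1]$, the reciprocals of all roots of $T_N$ lie in $\overline{\Omega}$. Granting this, fix $\zeta\in S$ and put $\eta=\zeta^{-1}$. If $|\zeta|>1$ then $\eta\in\mathcal{G}$ is a zero of an honest $T\in\FFF$; truncating $T$ to $T_N$ and applying Hurwitz produces $T_N$ (with the $a_j$ perturbed slightly into $(-1,1)$) having a root near $\eta$. If $|\zeta|\le 1$ then $|\eta|\ge 1$, and here a direct argument shows some $T_N\in\FFF$ has a root exactly at $\eta$: writing $\eta=re^{i\theta}$ with $r\ge 1$, the zonotope $\sum_{j=1}^N[-1,1]\eta^j$ contains $-1$ once $N$ is large enough that the directions $j\theta$ wrap past $\pi$ (the relevant segments then have length $2r^j\gg 1$), which is exactly solvability of $T_N(\eta)=0$ with coefficients in $[-1,1]$ (when $\eta$ is real this is already achieved by a degree-one polynomial).

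The heart of the argument is realizing a given $T_N$ as a limit of generalized Parry polynomials \emph{with control of Galois conjugacy}. I would fix a prime $p$ and a large multiple $m$ of $p$, and apply Theorem \ref{thm:criterion} with integers $M(1)=m$ and $M(j+1)$ the multiple of $p$ nearest to $m\,a_j$ for $1\le j\le N$, chosen distinct, with $M(N+1)$ divisible by $p$ but not $p^2$; I would also arrange that the last branch visited is decreasing, so the orbit of $1$ reaches $0$ and the $(\beta,E)$-expansion is finite. Theorem \ref{thm:criterion} then yields a generalized Parry number $\beta=\beta_m\in(m,m+1)$. Combining Lemmas \ref{identity0} and \ref{keyidentity1} in the finite case gives the factorization $z^{-(N+1)}P_{\beta,E}(z)=(1-\beta/z)\,T_\beta(1/z)$, where $T_\beta(w)=1+\sum_{j=1}^N c_j w^j$ and $c_j=s(j+1)f^j(1)$, so the conjugates of $\beta$ are precisely the reciprocals of the roots of $T_\beta$. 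The estimate $c_j=M(j+1)/\beta+O(1/m)=a_j+O(1/m)$ shows $T_{\beta_m}\to T_N$ coefficientwise, so by continuity of roots each root of $T_N$ is a limit of roots of $T_{\beta_m}$, whose reciprocals are conjugates of $\beta_m$ converging to the desired point.

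The main obstacle, and the reason for the divisibility bookkeeping, is that $\Omega$ consists of \emph{Galois} conjugates, whereas the roots of $P_{\beta,E}$ a priori include generalized $\beta$-conjugates. This is exactly what forcing a finite expansion and the congruences $p\mid M(j)$, $p^2\nmid M(N+1)$ resolve: the polynomial $P_{\beta_m,E}(z)=z^{N+1}-\sum_{j=1}^{N+1}M(j)z^{N+1-j}$ (the finite expansion removes the constant $-1$) is then Eisenstein at $p$, hence irreducible, so $\beta_m$ has degree $N+1$ and \emph{every} root of $P_{\beta_m,E}$ is a genuine Galois conjugate. A pleasant side effect is that $p\mid M(j)$ makes the hypothesis $|M(j)|\neq|M(k)|-1$ of Theorem \ref{thm:criterion} automatic. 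With Galois conjugacy secured, the approximating reciprocals lie in $\Omega$, so $\zeta\in\overline{\Omega}$; letting $N\to\infty$ in the outer case upgrades this to the exact value $\eta\in\mathcal{G}$, completing the proof.
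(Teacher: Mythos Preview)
Your approach is correct and rests on the same core mechanism as the paper: invoke Theorem~\ref{thm:criterion} to produce generalized Parry numbers $\beta_m$ satisfying a prescribed integer relation, arrange the coefficients so the associated degree-$(N+1)$ polynomial is Eisenstein at a prime $p$ (hence is the minimal polynomial of $\beta_m$), and then argue that its non-$\beta_m$ roots approximate the reciprocals of the roots of the target $T_N$.

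Two points of comparison are worth recording. First, the paper handles $\mathbb{D}\subset\overline{\Omega}$ by citing Solomyak's result that $\mathbb{D}$ already lies in the closure of conjugates of simple Parry numbers, whereas your zonotope construction (find a polynomial in $\FFF$ vanishing at any prescribed $\eta$ with $|\eta|\ge 1$) gives a self-contained treatment and lets you run the disk and the outer annulus through the same reduction; this is a genuine simplification, though your phrase ``once the directions $j\theta$ wrap past $\pi$'' would need a line or two more to be airtight in the unit-modulus case. Second, your detour through arranging a \emph{finite} expansion is unnecessary: the polynomial $q(z)=z^{N+1}-\sum_{j=1}^{N+1}M(j)z^{N+1-j}$ has $\beta$ as a root directly from Lemma~\ref{lem:criteria}, and the factorization $q(z)=(z-\beta)(z^N+c_1z^{N-1}+\cdots+c_N)$ with $c_k=M(k+1)/\beta+M(k+2)/\beta^2+\cdots$ follows from the recursion $p_{k+1}=\beta p_k-M(k+1)$ without any appeal to $P_{\beta,E}$ or to Lemmas~\ref{identity0}--\ref{keyidentity1}. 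The paper sidesteps this entirely by observing that $q(w^{-1})=0$ is equivalent to $g(w)=1/(wpM(1))$ and applying Rouch\'e to $g$ versus the small perturbation $h(w)=-1/(pM(1)w)$; your coefficient-convergence argument $T_{\beta_m}\to T_N$ is an equally valid route to the same conclusion.
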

\begin{proof}
We already know that the closed unit disk $\mathbb D$ is a subset of $\overline \Omega$, because $\mathbb D$ is contained in the closure of the Galois conjugates of the simple Parry numbers (see Theorem 2.1 of \cite{So94}). Thus, all it remains to show is that if $|z|>1$ and $z^{-1} \in \GGG$, then $z \in \overline \Omega$. The argument is a generalization of the second half of the proof of \cite[Theorem 2.1]{So94}.%, and we refer to that for a more formal treatment of how to handle all the quantifiers. 

Let $\lambda = z^{-1}$. Then $T(\lambda)=0$ for some $T(w) = 1+ \sum_{j=1}^\infty a_jw^j$ with $a_j \in [-1,1]$. We approximate $T$ with a function $g(w) = 1+ \sum_{j=1}^{n-1} b_jw^j$ with $b_j \in (-1,1)$, where all $b_j$ are rational. By taking $n$ large, and the $b_j$ arbitrarily close to the $a_j$, we can ensure that $g$ has a zero arbitrarily close to $\lambda$. %We refer the reader to the proof in \cite{So94} for a more formal treatment of `arbitrarily close'.
Writing each $b_j$ in the form $b_j = M(j+1)/M(1)$, where $M(n) \in \ZZ$, we have %Factoring out the least common denominator $M(1)$ of the $|b_j|$, we can write 
\[
g(w) = M(1)^{-1}(M(1) + M(2)w + \ldots + M(n) w^{n-1}).
\]
We can make sure that our $b_j$ are chosen so that all $M(j)$ are distinct and non-zero, $M(1) \geq 2$, $|M(j)|+1 <M(1)$ for all $j$, and $|M(j)|\neq |M(k)|-1$ for all $j, k$. Now take a  large prime $p>M(n)$ (to be fixed later). We know by Theorem \ref{thm:criterion} that the equation
\[
x = pM(1)+\frac{pM(2)}{x} + \frac{pM(3)}{x^2} + \ldots + \frac{pM(n)}{x^{n-1}}
\]
has a solution $\beta>1$ which is a generalized Parry number. Define the polynomial
\[
q(w)= w^n - pM(1)w^{n-1}- pM(2)w^{n-2} - \ldots - pM(n),
\]
so that $\beta$ is a zero of $q$. The polynomial q is irreducible by Eisenstein's Criterion, and hence $q$ is the minimal polynomial for $\beta$. Thus all other zeroes of $q$ are conjugates of $\beta$. 

The rest of the argument is to show that one of these zeroes is close to $z$. Elementary computation shows that $q(w^{-1})=0$ if and only if $g(w)=\frac{1}{wpM(1)}$. Thus letting $h(w)= -\frac{1}{pM(1)w}$, we have  $q(w^{-1})=0$ if and only if $(g+h)(w)=0$. We now use Rouch\'e's theorem. We are free to choose $p$ as large as we like, so we can ensure that $|h|<|g|$ on a small circle $\gamma$ centered at $\lambda$. Thus, $g+h$ has the same number of zeroes as $g$ inside $\gamma$. By the choice of $g$, we can ensure that $g$, and hence $g+h$, has a zero $w_0$ inside $\gamma$. Thus, $q$ has a zero $w_0^{-1}$ with $w_0$ in a neighbourhood of $\lambda$. We can ensure that $w_0$, which by construction is the inverse of a Galois conjugate of a simple generalized Parry number, is arbitrarily close to $\lambda$. This completes the proof. %Thus, $\lambda$ can be approximated by a sequence of numbers $w_j^{-1}$, where each $w_j$ is a Galois conjugate of a simple generalized Parry number.
\end{proof}

\subsection{Path-connectedness of $\overline \Omega$} Given Theorem \ref{thm:closure}, it is now easy to show that $\overline \Omega$ is path connected.  %Since $\overline \Omega$ is a larger set than $\overline \Omega_T$, our method does not yield path connectedness of $\overline \Omega_T$. 
%To prove that the larger set $\overline \Omega$ is path connected, 
We use the following lemma.
\begin{lemma} \label{starconvex}
Let $z=re^{i\theta} \in \overline \Omega$ with $r>1$. Then for all $r' \in (1, r)$, $z'=r'e^{i\theta} \in \overline \Omega$.
\end{lemma}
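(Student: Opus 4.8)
The goal of Lemma~\ref{starconvex} is to show that $\overline\Omega$ is \emph{star-shaped along rays} outside the unit disk: if a point $z=re^{i\theta}$ with $r>1$ lies in $\overline\Omega$, then every point $r'e^{i\theta}$ with $1<r'<r$ lies in $\overline\Omega$ as well. The plan is to reduce everything to the characterization of $\overline\Omega$ given in Theorem~\ref{thm:closure}, which identifies $\{z \mid |z|>1\} \cap \overline\Omega$ with $\{z \mid |z|>1 \text{ and } z^{-1}\in\GGG\}$, where $\GGG$ is the set of zeroes of functions in $\FFF$. So it suffices to work entirely with the zero set $\GGG$ and to prove the corresponding statement there: if $\lambda = z^{-1}$ is a zero of some $T\in\FFF$ with $|\lambda|<1$, then every $\lambda'$ on the segment from $\lambda$ out to the unit circle along the same argument (i.e. $\lambda' = \rho e^{i\theta}$ with $|\lambda| \le \rho < 1$) is also a zero of some function in $\FFF$.

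First I would pass from $z$ to $\lambda = z^{-1}$ via Theorem~\ref{thm:closure}, so the claim becomes: $\lambda\in\GGG$ with $|\lambda|<1$ implies $\rho e^{i\arg\lambda}\in\GGG$ for all $\rho\in[|\lambda|,1)$. The key observation is the scaling structure of $\FFF$. Suppose $T(w)=1+\sum_{j\ge1}a_j w^j$ with $a_j\in[-1,1]$ satisfies $T(\lambda)=0$. For a scaling factor $t\in(0,1]$, consider the function obtained by damping the coefficients, $T_t(w) = 1+\sum_{j\ge1}a_j t^j w^j$. Since $|a_j t^j|\le |a_j|\le 1$, we have $T_t\in\FFF$, and $T_t(w) = T(tw)$, so $T_t$ has a zero at $w=\lambda/t$. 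Taking $t\in(0,1)$ shifts the zero outward along the same ray: $\lambda/t$ has argument $\arg\lambda$ and modulus $|\lambda|/t > |\lambda|$. This produces zeroes of functions in $\FFF$ at every point $\rho e^{i\arg\lambda}$ with $\rho > |\lambda|$, provided such a point still has modulus $<1$ (which is exactly the range $\rho\in(|\lambda|,1)$, obtained by choosing $t = |\lambda|/\rho \in (|\lambda|,1)$). Translating back through $z'=\lambda'^{-1}$ and Theorem~\ref{thm:closure} gives $z'\in\overline\Omega$ for every $r'\in(1,r)$, which is the desired conclusion.

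The main point requiring care is whether the damped zero really stays in the correct range and whether the endpoint behavior is handled, but these are mild. The scaling $T_t(w)=T(tw)$ is an exact identity, so no approximation or limiting argument is needed for the interior of the segment; the membership $T_t\in\FFF$ is immediate from $|a_j t^j|\le 1$. The only genuine subtlety is that Theorem~\ref{thm:closure} only characterizes the part of $\overline\Omega$ outside the closed unit disk in terms of $\GGG$, so I must keep $\rho$ strictly less than $1$ (equivalently $r'>1$), which is precisely the hypothesis $r'\in(1,r)$; points with $r'=1$ fall into the unit disk $\mathbb D$, which is already known to be contained in $\overline\Omega$, so no boundary issue arises. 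I expect the entire argument to be short: the real content is recognizing the coefficient-damping symmetry of the class $\FFF$, after which the lemma follows directly from Theorem~\ref{thm:closure}.
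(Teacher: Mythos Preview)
Your proof is correct and follows essentially the same approach as the paper: both reduce to Theorem~\ref{thm:closure} and then use the scaling symmetry $T(tw)\in\FFF$ (equivalently, the paper's $T(z/a)\in\FFF$ for $a>1$) to push the zero $\lambda$ outward along the same ray, which pulls $z$ inward along its ray while remaining in $\overline\Omega$. The paper's version is simply a terser statement of the same computation.
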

\begin{proof}
By Theorem \ref{thm:closure}, we have $\lambda = z^{-1} \in \GGG$, so $T(\lambda)=0$ for some $T \in \FFF$. For any $a>1$, the function $\overline T(z):= T(z/a) \in \FFF$, and has $a \lambda$ as a zero. Thus $a\lambda \in \GGG$, and so $\frac 1 a z \in \overline \Omega$ for any $a>1$.
\end{proof}
Thus, we can connect any two points in $\overline \Omega$ using, for example, paths along at most two radial lines together with a path along the unit circle $S^1$.

\section{Unimodal maps as generalized $\beta$-transformations}\label{unimodal}
%We now show that the class of generalifrom the point of view of entropy, unimodal maps are contained in the class . We make this precise: 
We now use the results of the previous sections to study topological entropy for PCF continuous unimodal maps. The topological entropy of a continuous map on a compact metric space is a number that captures the exponential growth rate of distinct orbits of length $n$, and is a fundamental invariant of a topological dynamical system. See Walters for a general definition \cite{pW82}. The problem of deciding if a number can be obtained as the entropy of a map from a given class of systems has a long history with notable results including \cite{Li84, HM, wT14}.

For piecewise monotonic interval maps, the topological entropy has a simple formula which was first obtained by Misiurewecz and Szlenk \cite{MS, ALM}, and which for current purposes we take as our definition.
\begin{definition}
Let $f$ be a piecewise monotonic map of the unit interval. The \emph{topological entropy}, which we denote $\htop(f)$, or simply $h$, is defined to be
\[
\htop(f) = \lim_{n \to \infty} \frac 1 n \log \#\{\text{ \emph{branches of monotonicity for }} f^n\}.
\]
\end{definition}
Note that it is immediate from the definition that the entropy of a unimodal map is at most $\log2$. One can prove that if $f$ is a generalized $\beta$-transformation, then $\htop(f)= \log \beta$, see e.g. Corollary 4.3.13 of \cite{ALM}. 
It is well known that every unimodal map $f$ is topologically semi-conjugate to a $\lambda$-uniform expander $g$; that is, a piecewise affine continuous interval map whose slope on each interval of monotonicity is either  $\lambda$ or $-\lambda$. In particular, $\htop(f)= \htop(g)$, and if $f$ is PCF, then $g$ is PCF. This result was first proved in \cite{MT}, and is given as Theorem 4.6.8 of \cite{ALM}. Thus, to study the entropies of PCF unimodal maps, it suffices to study the entropies of PCF uniform expanders. Unimodal uniform expanders (perhaps after modifying by a conjugacy) can be thought of as generalized $\beta$-transformations. Thus the formalism of generalized $\beta$-transformations can be used to study the entropy of unimodal maps.  In particular, we have the following lemma.
\begin{lemma} \label{lem:conjugate}
Every PCF unimodal map is conjugate to a PCF generalized $\beta$-transformation
\end{lemma}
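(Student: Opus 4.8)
The plan is to combine the two major results established earlier in the excerpt: the semi-conjugacy theorem for unimodal maps (Theorem 4.6.8 of \cite{ALM}) and the formalism identifying uniform expanders with generalized $\beta$-transformations. Let $f$ be a PCF unimodal map. By the cited theory, $f$ is topologically semi-conjugate to a $\lambda$-uniform expander $g$, where $\lambda = \exp \htop(f)$, and the PCF property is preserved: the post-critical orbit of $f$ maps to a finite post-critical orbit of $g$. So the first step is to reduce from an arbitrary PCF unimodal map to a PCF unimodal uniform expander, which is the easy part already recorded in the discussion preceding the lemma.

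The substantive step is to exhibit a PCF unimodal uniform expander $g$ as a generalized $\beta$-transformation up to conjugacy. A unimodal uniform expander has two branches of slope $\pm\lambda$ with $\lambda \in (1,2]$. I would first argue that $\lambda \le 2$ (forced by unimodality and the entropy bound $h \le \log 2$), so that $\beta := \lambda$ lies in the admissible range $(1,2]$ for a two-branch generalized $\beta$-transformation. Then I would use an affine change of coordinates to normalize the domain of $g$ to $[0,1]$ and to place the turning point at $1/\beta$, matching the partition $I_0 = [0,1/\beta]$, $I_1 = (1/\beta,1]$ used in the $(1,-1)$ case defined in \S\ref{sec2}. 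Because the first branch of a unimodal map is increasing and the second decreasing, the sign configuration is exactly $E = (1,-1)$, and after normalization the formula for $g$ agrees with the $(\beta,E)$-transformation $f_{\beta,(1,-1)}$ on each branch. Since conjugacy by an affine homeomorphism preserves both topological entropy and the finiteness of the post-critical orbit, the resulting map is a PCF generalized $\beta$-transformation with the same entropy as $f$.

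The main obstacle I anticipate is the normalization at the turning point: a general unimodal uniform expander need not have its critical point located at the fraction $1/\beta$ of the interval, nor need its two branches have matching endpoint behavior with $f_{\beta,(1,-1)}$ (whose second branch decreases from $1$ down to $1-\{\beta\}=2-\beta$). One must check that continuity of $g$ at the turning point, together with the common slope magnitude $\beta$ on both branches, rigidly forces the position of the turning point to be $1/\beta$ once the domain is scaled to $[0,1]$ and the increasing branch is pinned to start at $0$; that is, the geometry of a continuous two-branch uniform expander leaves no freedom once entropy (hence slope) is fixed. I would carry this out by writing the two affine branches explicitly, imposing continuity at the peak and the requirement that the map sends the interval onto itself in the unimodal pattern, and solving for the breakpoint, which yields $1/\beta$ precisely. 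With this matching in place, the identification of $g$ with $f_{\beta,(1,-1)}$ is immediate, completing the conjugacy and hence the lemma.
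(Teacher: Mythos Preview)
Your overall strategy matches the paper's: reduce to a PCF $\lambda$-uniform expander via the Milnor--Thurston semi-conjugacy, then identify that expander with a generalized $\beta$-transformation up to conjugacy. The first step is fine. The second step, however, has a genuine gap.

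You assert that ``the geometry of a continuous two-branch uniform expander leaves no freedom once entropy (hence slope) is fixed,'' and that after an affine normalization the turning point must lie at $1/\beta$. This is false. An affine conjugacy has only two degrees of freedom; once you send the domain to $[0,1]$ you cannot also force $g(0)=0$ or place the turning point where you like. Concretely, a surjective continuous unimodal expander of slope $\pm\beta$ on $[0,1]$ with a maximum at $c$ satisfies $g(c)=1$ and $\min(g(0),g(1))=0$, giving two distinct possibilities: either $g(0)=0$, in which case $c=1/\beta$ and the map is $f_{\beta,(1,-1)}$, \emph{or} $g(1)=0$, in which case $c=1-1/\beta$ and the map is \emph{not} a generalized $\beta$-transformation (those always have the \emph{first} branch full, since $I_0=[0,1/\beta]$ has length $1/\beta$). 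You also tacitly assume the turning point is a maximum; a unimodal map may have a minimum, giving the sign configuration $(-1,1)$ and two more sub-cases. Finally, nothing in your argument guarantees surjectivity of the uniform expander on its original domain, which is needed before any of the above case analysis makes sense.

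The paper fills exactly these gaps. It first trims the domain to the invariant interval $\Lambda=\bigcap_{i\geq 0} g^i([0,1])$ and rescales, obtaining a surjective PCF uniform expander $G$ on $[0,1]$. Surjectivity forces at least one full branch, and then one does a four-case analysis according to the sign configuration $(1,-1)$ or $(-1,1)$ and which branch is full. In two cases $G$ is already a generalized $\beta$-transformation; in the other two (second branch full) one conjugates by $x\mapsto -x$ followed by a translation, which swaps the roles of the branches and lands in one of the first two cases. Your write-up needs this trimming step and this case analysis; the single ``solve for the breakpoint'' computation you propose only covers one of the four cases.
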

\begin{proof}
It suffices to show that every PCF $\lambda$-uniform expander $g$ is conjugate to a PCF generalized $\beta$-transformation. First, we use the standard trick of trimming the domain of $g$, and rescaling to get a surjective map of the unit interval. Let $\Lambda = \bigcap g^i ([0,1]) = [a,b]$, and consider $g|_\Lambda$. The critical point $c$ satisfies either $g(c)=a$ or $g(c)=b$. We also have $a, b \in \{g(a), g(b), g(c)\}$. We can conjugate by the affine transformation $\pi(x ) = \frac{1}{b-a}(x-a)$ to the map $G (x)= \pi \circ g \circ \pi^{-1}(x) = \frac{1}{b-a}[g((b-a)x +a)-a]$. Clearly an affine transformation will send critical points to critical points, and a conjugacy sends periodic orbits to periodic orbits, so the new map $G$ is PCF, surjective, and has domain $[0,1]$.

To be surjective, $G$ must have at least one full branch. There are four possibilities:
\begin{enumerate}
\item first branch full; sign configuration $(1,-1)$;

\item first branch full; sign configuration $(-1,1)$;

\item second branch full; sign configuration $(-1,1)$;

\item second branch full; sign configuration $(1,-1)$.
\end{enumerate}

Cases (1) and (2) are generalized $\beta$-transformations, with $\beta = \lambda$, and the appropriate sign configuration. For case (3), we conjugate by the transfomation $\pi (x) = -x$ to get the transfomation $h(x) = -G(-x)$ defined on $[-1, 0]$. We can conjugate by a translation to return the domain to $[0,1]$. The new map is in case (2). Similarly, a map in case (4) is conjugate to a map in case (1). Thus, $G$ is either a generalized $\beta$-transformation, or conjugate to a generalized $\beta$-transformation by an affine transformation.
\end{proof}

Thus, for a PCF unimodal map $f$, we have $\htop(f) = \log \beta$ for some generalized Parry number $\beta \in[1,2]$. So we have
\[
\Omega_T :=\{z \mid z\text{ is a conjugate of }e^{h(f)} \text{ for a PCF unimodal map }f \} \subset \Omega.
\]
The problem of giving a  description of $\Omega_T$ was raised in Thurston's final paper \cite{wT14}. His numerical results showed that apart from a spike along the real axis, this set appears to lie in a disk much smaller than the disk $|z|<2$.  Our description of $\Omega$ allows us to conclude that $\Omega_T \setminus \RR$ %all non-real Galois conjugates 
indeed lies in a disk of radius strictly less than $2$. As mentioned previously, although numerical results suggest that we should expect a bound less than $1.59$, rigorous sharp bounds are currently out of reach. Nevertheless, we establish the principle that all non-real Galois conjugates are contained inside a disk with a smaller radius than the trivial bound $2$. We now state this as a theorem. 
%We conclude by stating this as a theorem.
The proof is an immediate consequence of the discussion above and Theorem \ref{thm:bound}.
\begin{theorem}
%There exists $\epsilon >0$ so that the set
 %\[
%\{z \mid z\text{ is a conjugate of }e^{h(f)} \text{ for a PCF unimodal map }f \}\setminus \RR
%\]
%is contained in the disk $|z|<2-\epsilon$.
There exists $\epsilon >0$ so that if $z$ is a conjugate of $e^{h(f)}$ for a PCF unimodal map $f$ and $z \notin \RR$, then $|z|<2-\epsilon$.
\end{theorem}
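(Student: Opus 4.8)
The plan is to chain together the structural results already established in the paper, reducing the final statement to the quantitative bound of Theorem \ref{thm:bound}. First I would invoke Lemma \ref{lem:conjugate} together with the Misiurewicz--Szlenk entropy formula: if $f$ is a PCF unimodal map, then $f$ is conjugate to a PCF generalized $\beta$-transformation, and $\htop(f) = \log \beta$, so $e^{h(f)} = \beta$ is a generalized Parry number lying in $[1,2]$. Consequently any Galois conjugate $z$ of $e^{h(f)}$ is an element of $\Omega$. This places the entire problem inside the framework of the preceding sections.

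Next I would split into the cases $|z| \leq 1$ and $|z| > 1$. The case $|z| \leq 1$ is immediate for any fixed $\eps < 1$, since then $|z| \leq 1 < 2 - \eps$. For $|z| > 1$, the combination of Lemmas \ref{identity0} and \ref{keyidentity1} shows that $z^{-1}$ is a zero of a function in $\FFF$; writing $z = re^{i\phi}$, the hypothesis $z \notin \RR$ means $\phi \in (0,\pi) \cup (\pi, 2\pi)$, and after conjugating we may assume $\phi \in (0,\pi)$. By the definition of $\lambda_\phi$, the modulus $|z^{-1}| = r^{-1}$ is at least $\lambda_\phi$, so $r \leq \lambda_\phi^{-1} \leq \sup\{\lambda_\psi^{-1} : \psi \in (0,\pi)\}$. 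Theorem \ref{thm:bound} provides exactly the needed input: this supremum is strictly less than $2$. Setting $\eps > 0$ so that $2 - \eps$ equals (or exceeds) this supremum yields $|z| = r < 2 - \eps$.

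The one genuine subtlety, and the point I would be careful to articulate rather than skip, is the passage from a conjugate $z$ with $\phi \in (\pi, 2\pi)$ to the range $(0,\pi)$ where $\lambda_\phi$ is defined and where Theorem \ref{thm:bound} applies. Since $\beta$ is a real algebraic number, its Galois conjugates occur in complex-conjugate pairs, so $\bar z$ is also a conjugate of $\beta$ and has argument in $(0,\pi)$; the bound on $|\bar z| = |z|$ transfers immediately. This is the reason the hypothesis $z \notin \RR$ is essential and cannot be dropped: the endpoints satisfy $\lambda_0 = \lambda_\pi = \tfrac12$, giving the trivial bound $|z| < 2$ with no room to spare along the real axis.

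I do not expect any step to be a serious obstacle, as all the hard analytic work is contained in Theorem \ref{thm:bound}, whose proof rests on Solomyak's continuity of $\phi \mapsto \lambda_\phi$ and his double-root estimate from \cite{So95}. The proof of the present theorem is therefore essentially a bookkeeping assembly: translate the dynamical hypothesis into membership in $\Omega$, reduce to the zeros of $\FFF$ via the key identities, fold the complex-conjugate argument to land in $(0,\pi)$, and read off the uniform gap from Theorem \ref{thm:bound}.
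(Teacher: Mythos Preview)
Your proposal is correct and follows essentially the same approach as the paper: the paper's proof is the single sentence ``The proof is an immediate consequence of the discussion above and Theorem~\ref{thm:bound},'' and you have simply unpacked that discussion (Lemma~\ref{lem:conjugate}, the entropy formula, the reduction to zeros of $\FFF$ via Lemmas~\ref{identity0} and~\ref{keyidentity1}) and made explicit the complex-conjugate step needed to land in $(0,\pi)$.
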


Many questions remain about the sets $\Omega$ and $\Omega_T$. For example, can one describe $\overline \Omega \setminus \overline \Omega_T$? One way in which these sets differ is that, from Thurston's picture, $\overline \Omega_T$ appears to have `holes' around some roots of unity. This is ruled out for $\overline \Omega$ by the star-convexity proved in Lemma \ref{starconvex}. It would be interesting to determine the exact location and distribution of the holes that appear in $\overline \Omega_T$. The existence of holes for $\overline \Omega_T$ inside the unit disk was established in \cite{CKW}. Another question is to determine where, and by how much, the outer boundaries of $\overline \Omega$ and $ \overline \Omega_T$ differ. Numerical investigation of these questions could be a good first step towards rigorous results.

\bibliography{conjugatesforbeta}
\bibliographystyle{plain}

\end{document}